\crefname{hypothesis}{Hypothesis}{Hypotheses}
\title{Boundary treatment for high-order IMEX Runge-Kutta local discontinuous Galerkin schemes for multidimensional nonlinear parabolic PDEs\thanks{Submitted to the editors DATE.
\funding{The third author's research has been funded by FEDER and the Spanish Government through the coordinated Research project RTI2018-096064-B-C1. Also, this research has been partially funded by MCIN/AEI/10.13039/501100011033 and by the “European Union NextGenerationEU/PRTR” through the grant PDC2022-133663-C21 and by MCIN/AEI/10.13039/50110001103 and by “ERDF A way of making Europe”, by the “European Union” through the grant PID2022-137637NB-C21.
The other authors' research has been funded by the Spanish MINECO under research project number PID2022-141058OB-I00 and by the grant ED431G 2019/01 of CITIC, funded by Conseller\'ia de Educaci\'on, Universidade e Formaci\'on Profesional of Xunta de Galicia and FEDER.}}}
\author{V. González-Tabernero\thanks{Department of Mathematics and CITIC, Facultad de Informática, University of A Coruña, Campus de Elviña s/n, A Coruña, 15071, Galicia, Spain(\email{v.gonzalez.tabernero@udc.es}, \email{jose.lsalas@udc.es},  \email{jose.garcia.rodriguez@udc.es}).}
\and J. G. López-Salas\footnotemark[2]
\and M. J. Castro-Díaz\thanks{Department of An\'alisis Matem\'atico, Facultad de Ciencias, University of M\'alaga, Campus de Teatinos s/n, M\'alaga, 29080, Andalucía, Spain  (\email{mjcastro@uma.es}).}
\and J. A. García-Rodríguez\footnotemark[2]
}
\renewcommand{\l}{\left(}
\renewcommand{\r}{\right)}
\newcommand{\R}{\mathbb{R}}
\newcommand{\mb}[1]{\mathbf{#1}}
\newcommand\mbx{\mb{x}}
\newcommand\mbF{\mb{F}}
\newcommand\mbG{\mb{G}}
\newcommand\mbq{\mb{q}}
\newcommand*{\greysquare}{\textcolor{gray}{\squareneswfill}}
\renewcommand\div[1]{{\rm div\,} {#1}}
\begin{document}

\maketitle

\begin{abstract}
In this article, we propose novel boundary treatment algorithms to avoid order reduction when implicit-explicit Runge-Kutta time discretization is used for solving convection-diffusion-reaction problems with time-dependent Di\-richlet boundary conditions. We consider Cartesian meshes and PDEs with stiff terms coming from the diffusive parts of the PDE. 
The algorithms treat boundary values at the implicit-explicit internal stages in the same way as the interior points. The boundary treatment strategy is designed to work with multidimensional problems with possible nonlinear advection and source terms.
The proposed methods recover the designed order of convergence by numerical verification. For the spatial discretization, in this work, we consider Local Discontinuous Galerkin methods, although the developed boundary treatment algorithms can operate with other discretization schemes in space, such as Finite Differences, Finite Elements or Finite Volumes.
\end{abstract}

\begin{keywords}
PDEs, IMEX, LDG, order reduction, boundary treatment
\end{keywords}

\begin{MSCcodes}
65M20, 65M60, 65L06
\end{MSCcodes}

\section{Introduction}

The goal of this work is to develop high-order implicit-explicit numerical schemes for solving the following nonlinear multidimensional time-dependent scalar convection-diffusion-reaction PDE with time-dependent Dirichlet boundary conditions
\begin{align}
    &u_t+\div{\mbF}(u) = \div(\mbG(\nabla u)) + h(u), 
 \quad (\mbx,t)\in\Omega\times(0,T], \label{eq:generalPDE}\\
    &u(\mbx,0) = u_0(\mbx), \quad \mbx \in \Omega, \label{eq:iniCond}\\
    &u(\mbx,t) =\omega(\mbx,t), \quad \mbx \in \Gamma \equiv \partial\Omega, \, t\in(0,T], \label{eq:boundCond}
\end{align}
where $u_0$ and $\omega$ are the initial and boundary conditions, respectively. Besides, the transport flux function is given by ${\mbF}(u) = (f_1(u),\ldots,f_d(u))$ and $h$ defines the reaction terms. Moreover, $\mbG(\nabla u) = (g_1(\nabla u),\ldots,g_d(\nabla u))$ is the function defining the diffusion part. In this work, we consider nonlinear convection and source terms, while, for the sake of simplicity, we only contemplate linear diffusion operators. Finally, $\Omega$ is a bounded rectangular domain in $\R^d$.

For the time discretization of convection-diffusion-reaction equations, when the problem is diffu\-sion-dominated, it is well known in the literature that explicit schemes suffer a severe time step restriction for stability ($\Delta t = \mathcal{O}((\Delta x)^2)$). To overcome this demanding time step restriction, Implicit-Explicit (IMEX) Runge-Kutta (RK) schemes can be applied. More precisely, the convection and source terms are treated explicitly, while the diffusion parts (stiff terms) are handled implicitly. In this way, the stability condition of the IMEX method is of the type $\Delta t = \mathcal{O}(\Delta x)$, i.e., the explicit stability condition of the advection terms of the PDE. In this work, we only deal with non-stiff source terms, although the developed methods could be easily applied to the case of stiff sources, by implicitly treating them.

IMEX-RK methods have been extensively applied in the context of solving stiff ODE systems resulting from the semidiscretization of PDEs: for example, coupled with finite differences and finite volumes in \cite{Russo05}, and coupled with Local Discontinuous Galerkin (LDG) methods in \cite{Wang-Shu-Zhang-2016,Haijin-wang-Shu-M2AN-2016,Haijin-Shu-SIAM-JNA-2016,WANG2018164}.

Although very appealing at first glance, the application of IMEX RK schemes for time-dependent problems has also difficulties. IMEX methods achieve a high order of convergence by considering several intermediate time stages between one discretization time and the following. Generally speaking, the higher the order of the scheme, the higher the number of its internal steps. When IMEX time integrators are applied to solve the semidiscrete version of \eqref{eq:generalPDE}, boundary conditions have to be imposed also at the intermediate RK stages. Problems may appear if these boundary conditions are time-dependent. The naive way of imposing those boundary conditions consists of just the direct evaluation of the function describing the Dirichlet boundary condition at the boundary nodes and at the intermediate times of the IMEX stages. Such a strategy produces a well-known phenomenon of degradation (or loss) of the order of convergence, thus ruining the high-order of the RK scheme, and making its use pointless. 
This phenomenon is known in the literature as order reduction and can be severe. 
More precisely, order reduction may happen when a RK method is used together with the method of lines for the full discretization of an initial boundary value problem, see \cite{ostermannRoche92,sanzSerna86,WANG2018164} and references therein. Imposing a time-dependent boundary condition for the PDE at times $t^{n,i}$ associated with the IMEX stages, i.e. $u^{n,i}=\omega(t^{n,i})$, typically generates bigger errors in space near the boundaries, known as numerical boundary layers in the literature. A common representation of such errors is shown in Figure \ref{fig:BL_motivation}. Those spatial errors near the boundaries of the domain do not generally cancel out, thus yielding a composition of abnormally bigger errors in space in the neighborhood of the boundaries $\Gamma$. These errors greatly reduce the order of convergence of the method. One approach for remedying order reduction consists of modifying the way boundary conditions are imposed, by analyzing the local discretization error of the fully discrete method.
The process of obtaining suitable numerical boundary values for the imposition of the boundary conditions at the intermediate time stages is known as boundary treatment. It is a complex and classic, yet active, field of research. A proper boundary treatment is crucial for retaining the good high-order properties of the RK time integrators in the PDEs context. Otherwise, high-order IMEX schemes applied to stiff problems with time-dependent boundary conditions may be inefficient. Also notice that this phenomenon is not exclusively tied to semi-implicit RK time integrators, like IMEX-RK; but also appears in the case where traditional RK integrators are employed.

\begin{figure}[!htb]
\centering
\subfigure{\includegraphics[scale=0.39]{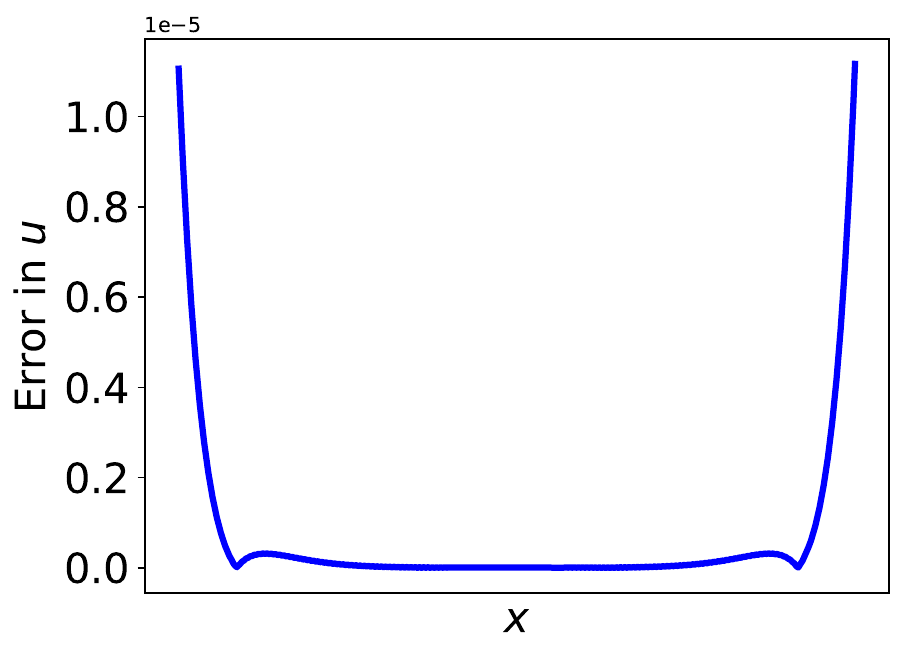}}
\subfigure{\includegraphics[scale=0.39]{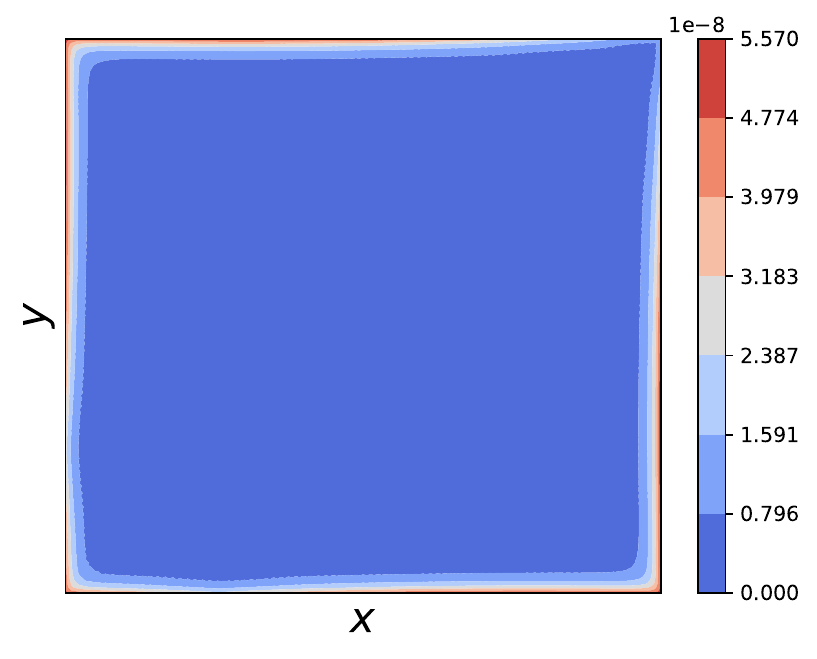}}
\caption{Examples of characteristic errors in space caused by the imposition of conventional boundary conditions on the internal stages of IMEX methods. Left: errors in the one-dimensional problem of Section \ref{subsec:1dheat}. Right: Contour plot of the errors for the two-dimensional problem of Section \ref{sec:exp2d}.}
\label{fig:BL_motivation}
\end{figure}

Boundary treatment strategies for explicit schemes were studied in  \cite{Baeza2016a,doi:10.1137/0719047, LU2016276, TAN20122510,TAN2010,WANG2018164,ZHAO2020,ZHAO2022} and references therein. A classical reference for the boundary treatment of hyperbolic PDEs in finite differences and finite element schemes is \cite{doi:10.1137/0719047}, by  Gottlieb et al. In \cite{Baeza2016a}, Baeza et al. present a technique for the extrapolation of information from the interior of the computational domain to ghost cells designed for structured meshes. The authors applied Lagrange interpolation with a filter for the detection of discontinuities that permits a data-dependent extrapolation, with higher order at smooth regions and essentially non-oscillatory properties near discontinuities. In \cite{LU2016276}, Shu et al. designed an inverse Lax-Wendroff procedure for the imposition of numerical boundary conditions in the convection–diffusion setting with explicit time integrators. In \cite{ZHAO2020}, Zhao et al. present a boundary treatment method for explicit  RK methods for solving hyperbolic systems with source terms. In  \cite{ZHAO2022}, a finite difference boundary treatment method is proposed for RK methods of hyperbolic conservation laws; the method combines an inverse Lax-Wendroff procedure and a WENO-type extrapolation.

For IMEX time integration schemes, to the best of our knowledge, \cite{WANG2018164,ZHAO2020IMEX} are the only works in the literature presenting boundary treatment strategies.  In \cite{ZHAO2020IMEX}, Zhao et al. develop a high-order finite difference boundary treatment method for IMEX RK schemes when solving hyperbolic systems with stiff source terms on structured meshes. The authors compute the solutions at ghost points from the wide stencil of the interior high-order scheme by using the RK scheme at the boundary and an inverse Lax-Wendroff (ILW) procedure. In \cite{WANG2018164}, Shu et al. present a technique to avoid order reduction when third-order IMEX RK time discretization is used together with LDG spatial discretization. The boundary treatment strategy is only \textit{ad hoc} presented for a specific one-dimensional linear convection-diffusion PDE with time-dependent Dirichlet boundary conditions and a particular third-order IMEX scheme. The authors propose a strategy of boundary treatment at each intermediate stage. The general idea is to treat boundary values in the same way as the interior points. A Cauchy-Koval\'eskya procedure (see, for example, \cite{DAKIN2018228,SEAL2017}), in combination with the differentiation of the IMEX equations for the stages, and the numerical approximation of high-order derivatives that appear in the process, is presented. 

Although the technique proposed here can be applied regardless of the employed spatial discretization, we will focus on LDG methods. The LDG method was first presented in \cite{Cockburn-Shu-91-I}. This method has been applied for hyperbolic conservation laws in \cite{Cockburn-Shu-90-IV,Cockburn-Karniadakis-Shu-2000,Cockburn-Shu-89-III,Cockburn-Shu-89-II,Cockburn-Shu-91-I,Zhang-Shu-Wang-2010}, and references therein; and for convection-diffusion-reaction  problems in \cite{Cockburn-Shu-SIAM-JNA-1998,Cockburn-Shu-JSC-2001,Haijin-Shu-2017}, and the references therein. IMEX LDG schemes were studied in \cite{Haijin-wang-Shu-M2AN-2016} and references therein.

The novelties of our work concerning \cite{WANG2018164} are the following: the extension to the nonlinear case and the presence of source terms, the general results for arbitrary order of convergence higher than three, as well as the extension to multidimensional problems. On top of that, a general procedure for boundary treatment is presented here for convection-diffusion-reaction PDEs discretized by the method of lines using general IMEX LDG methods, which simplifies the procedure described in \cite{WANG2018164}. To the best of our knowledge, this is the first time that a general procedure of this type has been proposed for arbitrary IMEX LDG methods. The proposed strategy achieves optimal order of accuracy by numerical verification.

The structure of this paper is the following. We start by making a brief review of IMEX LDG numerical schemes in Section \ref{sec:IMEXLDG}, paying special attention to the alternating numerical fluxes and their boundary conditions implications. In Section \ref{sec:bt_gt} we present the extension of the technique presented in \cite{WANG2018164} to general nonlinear parabolic equations with reaction terms.
In Section \ref{sec:numericalAlgs} we propose novel general algorithms for the boundary treatment of IMEX LDG schemes. In Section \ref{sec:numericalExperiments}, we perform numerical experiments to assess the good performance of the proposed boundary treatments, and we show tables of order of convergence for the empirical validation of the presented techniques. Finally, in the Supplementary Materials \ref{app:A}, \ref{app:B} and \ref{app:C}, we fully detail the application of the boundary treatment technique developed in Section \ref{sec:bt_gt} to the PDE problems of Section \ref{sec:numericalExperiments} (numerical experiments).

\section{IMEX LDG methods} \label{sec:IMEXLDG}
This section discusses the discretization of \eqref{eq:generalPDE}. Firstly, in subsection \ref{subsec:LDG} we describe the space discretization obtained by LDG schemes. Then, subsection \ref{subsec:IMEX} is devoted to IMEX Runge-Kutta schemes applied to the stiff system of differential equations obtained by the previous LDG space discretization.

\subsection{LDG space semidiscretization}\label{subsec:LDG}
In this section, we discuss the spatial semidiscreti\-zation of the problem \eqref{eq:generalPDE}-\eqref{eq:boundCond}.
Here, we follow \cite{Cockburn-Shu-SIAM-JNA-1998}, therefore we start rewriting PDE \eqref{eq:generalPDE} as the following equivalent first-order system of PDEs:
\begin{align}
&u_t+\div{\mbF}(u) = \div \mbG(\mbq(\mbx,t)) + h(u), \quad (\mbx,t)\in\Omega\times(0,T],\label{eq:system1}\\
&\mbq(\mbx,t)= \nabla u(\mbx,t),\label{eq:system2}
\end{align}
with $\mbq(\mbx,t)= (q_1(\mbx,t),\ldots,q_d(\mbx,t))$
and with the same initial condition \eqref{eq:iniCond} and boundary conditions \eqref{eq:boundCond}.  In what follows, for simplicity, we only consider the two-dimensional case ($d=2$), and we use the notation $\mbx=(x,y)$, $\mbF=(f_1,f_2)$ and $\mbG=(g_1,g_2)$.

We assume the rectangular mesh $\hat{\Omega}=\left\{\greysquare_{ij}=[x_i,x_{i+1}]\times[y_j,y_{j+1}] \right\}_{i,j=0}^{N-1,M-1}$ covering the spatial domain $\Omega$ with rectangular elements that will be denoted by $\greysquare_{ij}$. Besides, $\square_{ij}$ represents the boundary of the volume $\greysquare_{ij}$. The area of the $ij$-element is $\Delta_i x  \Delta_j y = (x_{i+1}-x_i)(y_{j+1}-y_j)$, for $i=0,\ldots,N-1$, $j=0,\ldots,M-1$. The north, south, east and west boundaries of the volume $\greysquare_{ij}$ will be denoted as $\Gamma_{ij}^n$, $\Gamma_{ij}^s$, $\Gamma_{ij}^e$, $\Gamma_{ij}^w$, respectively, i.e. $\square_{ij} = \Gamma_{ij}^n \cup \Gamma_{ij}^s \cup \Gamma_{ij}^e \cup \Gamma_{ij}^w$.

Associated to the mesh $\hat{\Omega}$, we define the discontinuous finite element space with tensor product polynomials
$$\hat{V} = \left\{ v \in L^2(\Omega): v|_{\greysquare_{ij}}=v_{ij} \in \mathcal{P}_{k}(\greysquare_{ij}), \forall \greysquare_{ij} \in \hat{\Omega} \right\},$$
where $\mathcal{P}_k(\greysquare_{ij})=\{\text{polynomials in $\greysquare_{ij}$ of degree at most } k \text{ in each variable  } x \text{ and } y\}$. The functions $p$ in this space $\hat{V}$ are allowed to have discontinuities across the boundaries of the volumes. For each boundary of each volume, for any piecewise function $p$, there are two traces along the right-hand (up) and left-hand (down), denoted by $p^+$ and $p^-$, respectively. For the north and south boundaries, right-hand means \textit{from the top} while left-hand refers to \textit{from the bottom}.

Now we detail how to find the numerical solution of the LDG scheme \eqref{eq:system1}-\eqref{eq:system2}. The initial condition $u(x,y,0)\in\hat{V}$ is taken as an approximation of the given initial solution $u_0(x,y,0)$.  For any time $t>0$,  the goal is to find the numerical solution $(u(x,y,t),q_1(x,y,t),$ $q_2(x,y,t))\in \hat{V}\times \hat{V}\times \hat{V}$ satisfying the following variational formulation
\begin{align}
 (u_t,v)_{\greysquare_{ij}} &=\mathcal{C}_{\greysquare_{ij}}(u,v)+\mathcal{H}_{\greysquare_{ij}}(u,v)+\mathcal{D}_{\greysquare_{ij}}(q_1,q_2,v), \label{eq:variational1}\\
(q_1,r_1)_{\greysquare_{ij}} &= \mathcal{K}^1_{\greysquare_{ij}}(u,r_1),\label{eq:variational2}\\
(q_2,r_2)_{\greysquare_{ij}} &= \mathcal{K}^2_{\greysquare_{ij}}(u,r_2),\label{eq:variational3}
\end{align}
in each volume $\greysquare_{ij}$, for any test functions $(v,r_1,r_2)\in\hat{V}\times\hat{V}\times\hat{V}$. Here
\begin{align}
 \mathcal{C}_{\greysquare_{ij}}(u,v)&=({\mbF}(u) ,\nabla v)_{ \greysquare_{ij}}- \langle \tilde{\mbF}(u_{ij}^{in},u_{ij}^{out}) , v \rangle_{\square_{ij}},\label{convective-term-1}\\
 \mathcal{H}_{\greysquare_{ij}}(u,v) &= ( h(u), v )_{\greysquare_{ij}},\\
 \mathcal{D}_{\greysquare_{ij}}(q_1,q_2,v)&=- (\mbG(q_1,q_2), \nabla v)_{\greysquare_{ij}} + \langle \mbG(\tilde{q_1},\tilde{q_2}), v \rangle_{ \square_{ij}}, \label{diffusive-term-1}\\
 \mathcal{K}^1_{\greysquare_{ij}}(u,r_1) &= -(u,(r_1)_x)_{ \greysquare_{ij}}+\langle  \tilde{u} ,r_1 \rangle_{\Gamma_{ij}^e} - \langle  \tilde{u} ,r_1 \rangle_{\Gamma_{ij}^w}\label{diffusive-term-2},\\
\mathcal{K}^2_{\greysquare_{ij}}(u,r_2) &= -(u,(r_2)_y)_{ \greysquare_{ij}}+\langle  \tilde{u} ,r_2 \rangle_{\Gamma_{ij}^n} - \langle  \tilde{u} ,r_2 \rangle_{\Gamma_{ij}^s}\label{diffusive-term-3},
\end{align}
where $u_{ij}^{in}$ and $u_{ij}^{out}$ denote the values of $u$ computed from inside and outside the volume $\greysquare_{ij}$. Besides, $(\cdot,\cdot)_{\greysquare_{ij}}$ and $\langle \cdot,\cdot\rangle_{\Gamma_{ij}}$ are the standard inner products in $L^2(\greysquare_{ij})$ and $L^2(\Gamma_{ij})$, respectively, i.e. the first one represents the following volume integral
$$(u,v)_{\greysquare_{ij}}=\int_{\greysquare_{ij}} u v \,dx dy,$$ while the last one the line integral $$\langle u,v\rangle_{\Gamma_{ij}} = \int_{\Gamma_{ij}} u v\, dl.$$

In equations \eqref{convective-term-1}-\eqref{diffusive-term-3}, terms marked with the tilde command denote a numerical flux. Firstly let us work with the line integral $$\langle\tilde{\mbF}(u_{ij}^{in},u_{ij}^{out}) , v \rangle_{\square_{ij}} = \oint_{\square_{ij}} \tilde{\mbF}(u_{ij}^{in},u_{ij}^{out}) \cdot \mathbf{n} \,v \,dl,$$ where $\mathbf{n}$ is the outward unit normal of the volume boundary $\square_{ij}$.  $\tilde{\mbF}(u_{ij}^{in},u_{ij}^{out}) \cdot \mathbf{n}$ is any monotone numerical flux. In this work we considered the simple local Lax-Friedrichs flux, which is given by $$\tilde{\mbF}(u_{ij}^{in},u_{ij}^{out}) \cdot \mathbf{n} \approx \frac12 \left[ \left( \mbF(u^{in}_{ij}) + \mbF(u^{out}_{ij})\right) \cdot \mathbf{n} - \alpha \left( u^{out}_{ij}-u^{in}_{ij}\right) \right],$$ where $\alpha$ is taken as an upper bound for the eigenvalues of the Jacobian in the $\mathbf{n}$ direction. Secondly, for $\tilde{u}$, $\tilde{q_1}$ and $\tilde{q_2}$ an alternating numerical flux has to be considered (see
 \cite{Cockburn-Shu-SIAM-JNA-1998}).

In our work, the alternating numerical flux at the boundaries of interior volumes $\greysquare_{ij}$, $i=1,\ldots,N-2$, $j=1,\ldots,M-2$ is defined as:
\begin{itemize}
    \item On the east boundary of the volume $\greysquare_{ij}$, $\tilde{u}=u^-_{\Gamma^e_{ij}}$, $\tilde{q}_1={q}_{1_{\Gamma^e_{ij}}}^+$, $\tilde{q}_2=q_{2_{\Gamma^e_{ij}}}^+$.
    \item On the west boundary of the volume $\greysquare_{ij}$,  $\tilde{u}=u^-_{\Gamma_{ij}^{w}}$, $\tilde{q_1}={q_1}^{+}_{\Gamma_{ij}^{w}}$, $\tilde{q_2}={q_2}^{+}_{\Gamma_{ij}^{w}}$.
    \item On the north boundary of the volume $\greysquare_{ij}$, $\tilde{u}=u^-_{\Gamma_{ij}^{n}}$, $\tilde{q_1}={q_1}^{+}_{\Gamma_{ij}^{n}}$, $\tilde{q_2}={q_2}^{+}_{\Gamma_{ij}^{n}}$.  
    \item On the south boundary of the volume $\greysquare_{ij}$, $\tilde{u}=u^-_{\Gamma_{ij}^{s}}$, $\tilde{q_1}={q_1}^{+}_{\Gamma_{ij}^{s}}$, $\tilde{q_2}={q_2}^{+}_{\Gamma_{ij}^{s}}$.
\end{itemize}
The important point is that $\tilde{u}$ and $(\tilde{q_1},\tilde{q_2})$ have to be chosen from different directions. Therefore, selecting $\tilde{u}$ from the right-hand side and $(\tilde{q_1}, \tilde{q_2})$ from the left-hand side is also fine. Nevertheless, in this article, to illustrate the boundary treatment, for the alternating numerical flux, the previous choice is considered, which is also sketched in Figure \ref{fig:alternatingUleftQright}. Then, some numerical fluxes at exterior volume boundaries have to be carefully handled. Firstly, for the volumes $\greysquare_{0j}$, $\tilde{u}$ is set to the Dirichlet boundary condition, i.e. $\tilde{u}_{\Gamma_{0j}}=\omega\lvert_{\Gamma_{0j}}$. Similarly, for the volumes $\greysquare_{i0}$, $\tilde{u}\lvert_{\Gamma_{i0}}=\omega\lvert_{\Gamma_{i0}}$. Both these two impositions are natural under the alternating choice of Figure \ref{fig:alternatingUleftQright}. Next, for the exterior right and up volume boundaries of the volumes $\greysquare_{N-1,j}$ and $\greysquare_{i,M-1}$, numerical fluxes $\tilde{u}$, $\tilde{q_1}$, $\tilde{q_2}$ are firstly inverted, concerning the choice in Figure \ref{fig:alternatingUleftQright}. Therefore $u$ is selected from the right-hand side, while $q_1$ and $q_2$ are taken from the left-hand side. Taking advantage of the same boundary treatment strategy just for $u$ in all $\Gamma(\Omega)$ is the reason behind this change. Such boundary treatment scheme is going to be designed in Section \ref{sec:bt_gt}. If this inversion was not carried out, one would also need to design a proper boundary treatment for Neumann boundary conditions for $q_1$ and $q_2$ at $\Gamma^e(\Omega)$ and $\Gamma^n(\Omega)$. 

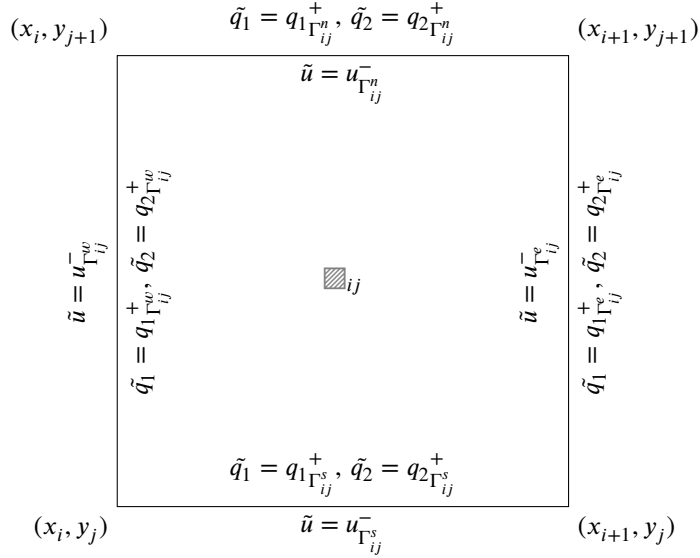
\begin{figure}[!htb]
\begin{center}
\begin{tikzpicture}
  \draw (0, 0) rectangle (6, 6);
  \path
    (0, 0) node[below left] {$(x_i, y_j)$}
    (6, 0) node[below right] {$(x_{i+1}, y_j)$}
    (6, 6) node[above right] {$(x_{i+1}, y_{j+1})$}
    (0, 6) node[above left] {$(x_i, y_{j+1})$}
    (3, 3) node[] {$\greysquare_{ij}$}
    (0, 3) node[left] {\rotatebox[origin=c]{+90}{$\tilde{u}=u^-_{\Gamma_{ij}^{w}}$}}
    (0, 3) node[right] {\rotatebox[origin=c]{+90}{$\tilde{q_1}={q_1}^{+}_{\Gamma_{ij}^{w}}, \, \tilde{q_2}={q_2}^{+}_{\Gamma_{ij}^{w}}$}}
    (3, 0) node[below] {$\tilde{u}=u^-_{\Gamma_{ij}^{s}}$}
    (3, 0) node[above] {$\tilde{q_1}={q_1}^{+}_{\Gamma_{ij}^{s}}, \, \tilde{q_2}={q_2}^{+}_{\Gamma_{ij}^{s}}$}
    (6, 3) node[left] {\rotatebox[origin=c]{+90}{$\tilde{u}=u^-_{\Gamma_{ij}^{e}}$}}
    (6, 3) node[right] {\rotatebox[origin=c]{+90}{$\tilde{q_1}={q_1}^{+}_{\Gamma_{ij}^{e}}, \, \tilde{q_2}={q_2}^{+}_{\Gamma_{ij}^{e}}$}}
    (3, 6) node[below] {$\tilde{u}=u^-_{\Gamma_{ij}^{n}}$}
    (3, 6) node[above] {$\tilde{q_1}={q_1}^{+}_{\Gamma_{ij}^{n}}, \, \tilde{q_2}={q_2}^{+}_{\Gamma_{ij}^{n}}$}
  ;
\end{tikzpicture}
\end{center}
\caption{Definition of alternating numerical flux at the boundaries of interior volumes.}
\label{fig:alternatingUleftQright}
\end{figure}

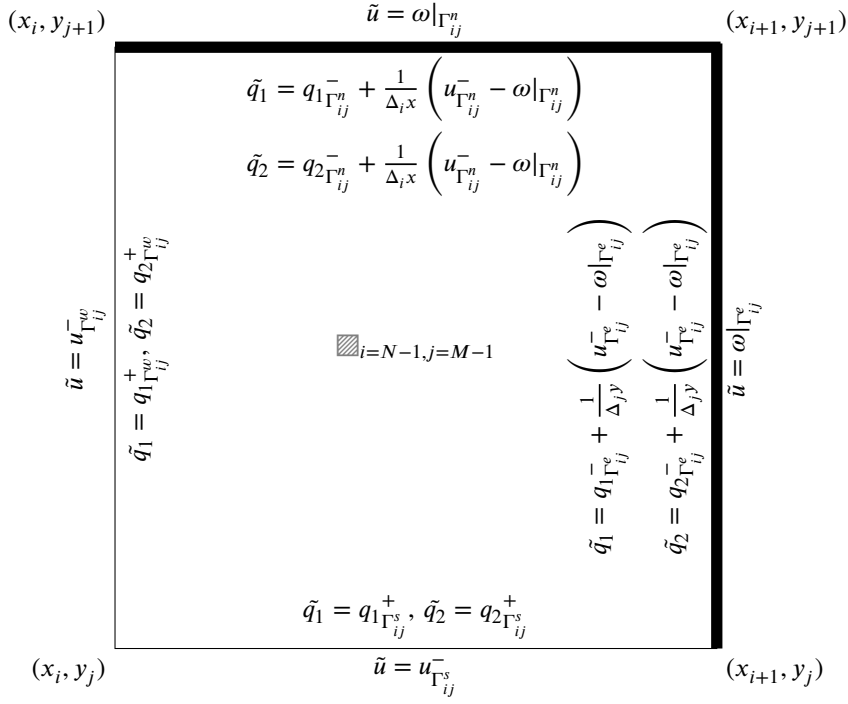
\begin{figure}[!htb]
\begin{center}
\begin{tikzpicture}
  \draw[line width=1.5mm, black] (8,0) -- (8,8.05);
  \draw[line width=1.5mm, black] (0,8) -- (8.05,8);
  \draw (0, 0) rectangle (8, 8);
  \path
    (0, 0) node[below left] {$(x_i, y_j)$}
    (8, 0) node[below right] {$(x_{i+1}, y_j)$}
    (8, 8) node[above right] {$(x_{i+1}, y_{j+1})$}
    (0, 8) node[above left] {$(x_i, y_{j+1})$}
    (4, 4) node[] {$\greysquare_{i=N-1,j=M-1}$}
    (0, 4) node[left] {\rotatebox[origin=c]{+90}{$\tilde{u}=u^-_{\Gamma_{ij}^{w}}$}}
    (0, 4) node[right] {\rotatebox[origin=c]{+90}{$\tilde{q_1}={q_1}^{+}_{\Gamma_{ij}^{w}}, \, \tilde{q_2}={q_2}^{+}_{\Gamma_{ij}^{w}}$}}
    (4, 0) node[below] {$\tilde{u}=u^-_{\Gamma_{ij}^{s}}$}
    (4, 0) node[above] {$\tilde{q_1}={q_1}^{+}_{\Gamma_{ij}^{s}}, \, \tilde{q_2}={q_2}^{+}_{\Gamma_{ij}^{s}}$}
    (8, 4) node[right] {\rotatebox[origin=c]{+90}{$\tilde{u}=\omega\lvert_{\Gamma_{ij}^{e}}$}}
    (7, 3.5) node[left] {\rotatebox[origin=c]{+90}{$\tilde{q_1}={q_1}^{-}_{\Gamma_{ij}^{e}}+\frac{1}{\Delta_j y} \left( u^-_{\Gamma_{ij}^{e}}-\omega\lvert_{\Gamma_{ij}^{e}}\right)$}}
    (8, 3.5) node[left] {\rotatebox[origin=c]{+90}{$\tilde{q_2}={q_2}^{-}_{\Gamma_{ij}^{e}}+\frac{1}{\Delta_j y} \left( u^-_{\Gamma_{ij}^{e}}-\omega\lvert_{\Gamma_{ij}^{e}}\right)$}}    
    (4, 8) node[above] {$\tilde{u}=\omega\lvert_{\Gamma_{ij}^{n}}$}
    (4, 8) node[below] {$\tilde{q_1}={q_1}^{-}_{\Gamma_{ij}^{n}}+\frac{1}{\Delta_i x} \left( u^-_{\Gamma_{ij}^{n}}-\omega\lvert_{\Gamma_{ij}^{n}}\right)$}    
    (4, 7) node[below] {$\tilde{q_2}={q_2}^{-}_{\Gamma_{ij}^{n}}+\frac{1}{\Delta_i x} \left( u^-_{\Gamma_{ij}^{n}}-\omega\lvert_{\Gamma_{ij}^{n}}\right)$}    
  ;
\end{tikzpicture}
\end{center}
\caption{Definition of alternating numerical flux at the boundaries of the $\greysquare_{N-1,M-1}$ volume.}
\label{fig:alternatingUleftQrightRightUpCorner}
\end{figure}

Finally, on top of that alternating inversion, penalty terms of the form $\frac{1}{\Delta}(u^--\omega)$ are added in the numerical fluxes for $\tilde{q_1}$ and $\tilde{q_2}$. These penalty expressions are considered to ``enhance the stability and guarantee the optimal accuracy of the scheme'', see \cite{WANG2018164}. Lastly, as an example, the alternating numerical fluxes for the right-up cornered volume $\greysquare_{N-1,M-1}$ are the following (also sketched in Figure \ref{fig:alternatingUleftQrightRightUpCorner}),
\begin{itemize}
    \item On the east boundary of the volume $\greysquare_{i=N-1,j=M-1}$, $$\tilde{u}=\omega\lvert_{\Gamma_{ij}^{e}}, \tilde{q_1}={q_1}^{-}_{\Gamma_{ij}^{e}}+\frac{1}{\Delta_j y} \left( u^-_{\Gamma_{ij}^{e}}-\omega\lvert_{\Gamma_{ij}^{e}}\right), \tilde{q_2}={q_2}^{-}_{\Gamma_{ij}^{e}}+\frac{1}{\Delta_j y} \left( u^-_{\Gamma_{ij}^{e}}-\omega\lvert_{\Gamma_{ij}^{e}}\right).$$
    \item On the west boundary of the volume $\greysquare_{i=N-1,j=M-1}$,  $\tilde{u}=u^-_{\Gamma_{ij}^{w}}$, $\tilde{q_1}={q_1}^{+}_{\Gamma_{ij}^{w}}$,  $\tilde{q_2}={q_2}^{+}_{\Gamma_{ij}^{w}}$.
    \item On the north boundary of the volume $\greysquare_{i=N-1,j=M-1}$, 
    $$\tilde{u}=\omega\lvert_{\Gamma_{ij}^{n}}, \tilde{q_1}={q_1}^{-}_{\Gamma_{ij}^{n}}+\frac{1}{\Delta_i x} \left( u^-_{\Gamma_{ij}^{n}}-\omega\lvert_{\Gamma_{ij}^{n}}\right), \tilde{q_2}={q_2}^{-}_{\Gamma_{ij}^{n}}+\frac{1}{\Delta_i x} \left( u^-_{\Gamma_{ij}^{n}}-\omega\lvert_{\Gamma_{ij}^{n}}\right).$$
    \item On the south boundary of the volume $\greysquare_{i=N-1,j=M-1}$, $\tilde{u}=u^-_{\Gamma_{ij}^{s}}$, $\tilde{q_1}={q_1}^{+}_{\Gamma_{ij}^{s}}$, $\tilde{q_2}={q_2}^{+}_{\Gamma_{ij}^{s}}$.
\end{itemize}

So far the semidiscrete LDG scheme has been defined. By summing up the variational formulations \eqref{eq:variational1}-\eqref{eq:variational3} over all the volumes $\greysquare_{ij}$ ($i=0,\ldots,N-1$, $j=0,\ldots,M-1$) we get the following semidiscrete LDG in the global form: 
\begin{align}
    (u_t,v)_{\hat{\Omega}} &= \mathcal{C}(u,v)+\mathcal{H}(u,v)+\mathcal{D}(q_1,q_2,v),\label{eq:globalVariational1}\\
    (q_1,r_1)_{\hat{\Omega}} &= \mathcal{K}^1(u,r_1),\label{globalVariational2}\\
    (q_2,r_2)_{\hat{\Omega}} &= \mathcal{K}^2(u,r_2),\label{globalVariational3}    
\end{align}
where $$(v,w)_{\hat{\Omega}} = \sum_{\greysquare_{ij}} (v,w)_{\greysquare_{ij}}$$ is the inner product in $L^2(\hat{\Omega})$. Besides $$\mathcal{C}(u,v) = \sum_{\greysquare_{ij}} \mathcal{C}_{\greysquare_{ij}}(u,v),$$ and similarly for $\mathcal{H}$, $\mathcal{D}$, $\mathcal{K}^1$ and $\mathcal{K}^2$.

Finally, we express the numerical solution as $$\hat{u}(x,y)=\sum_{k_1,k_2=0}^k u_{i,j}^{k_1,k_2} \Phi_{ij}^{k_1,k_2}(x,y), \qquad (x,y) \in \greysquare_{ij},$$
and we should solve for the coefficients $u_{i,j}^{k_1,k_2}$. In this work, we take the orthogonal nodal basis defined by the tensor product of the one-dimensional Lagrange interpolation polynomial basis over the $k$ Gauss-Legendre quadrature nodes in the intervals $[x_i,x_{i+1}]$ and $[y_j,y_{j+1}]$, respectively. 
As an example, we show under this setting how to compute $\mathcal{H}_{\greysquare_{ij}}(u,v) = ( h(u), v )_{\greysquare_{ij}}$. Let $\Phi^{k_1,k_2}(\xi_x,\xi_y) = \phi^{k_1}(\xi_x) \phi^{k_2}(\xi_y)$ denote the $(k_1,k_2)$-Lagrange polynomial basis on the $(k_1,k_2)$ Gauss-Legendre quadrature node in the canonical volume $[-1,1]\times[-1,1]$. Let $w_{k_1,k_2}=w_{k_1} w_{k_2}$ denote the weight associated to such quadrature node. Such basis functions are mapped to the volume $\greysquare_{ij}$ with the bijections $T_i^x(\xi_x) = \frac{x_{i+1}+x_i}{2} + \frac{\Delta_i x}{2}\xi_x$, i.e $$\Phi_{ij}^{k_1,k_2}(x,y) =  \phi^{k_1}\circ  \l T_i^x\r^{-1} (x) \,\cdot \, \phi^{k_2}\circ  \l T_j^y\r^{-1} (y). $$ Therefore:
\begin{align*}
 \mathcal{H}_{\greysquare_{ij}}(u,\Phi_{ij}^{k_1,k_2}(x,y)) &= \int_{\greysquare_{ij}} h(u)\Phi_{ij}^{k_1,k_2}(x,y) \,dxdy \\
 & \hspace{-0.2cm} = \dfrac{\Delta_i x}{2}\dfrac{\Delta_j y}{2}\int_{[-1,1]\times[-1,1]} h \l u \l T_i^x(\xi_x),T_j^y(\xi_y) \r \r \, \phi^{k_1}(\xi_x)\phi^{k_2}(\xi_y)\,d\xi_x d\xi_y \\
&\hspace{-0.2cm}\approx \dfrac{\Delta_i x}{2}\dfrac{\Delta_j y}{2} \sum_{k_1,k_2=0}^{k} w_{k_1,k_2}h \l  u \l T_i^x(\xi_{x_{k_1}}),T_j^y(\xi_{y_{k_2}}) \r \r \, \phi^{k_1}(\xi_{x_{k_1}})\phi^{k_2}(\xi_{y_{k_2}})  \\
& \hspace{-0.2cm}= \dfrac{\Delta_i x}{2}\dfrac{\Delta_j y}{2} w_{k_1} w_{k_2} h \l u_{ij}^{k_1,k_2} \r.
\end{align*}
The computations of \eqref{convective-term-1}, \eqref{diffusive-term-1}-\eqref{diffusive-term-3} are left to the reader.

In the next section, the spatial discretization is coupled with the IMEX time marching scheme.

\subsection{IMEX time discretization}\label{subsec:IMEX}

Before the end of this section, a fully-discrete LDG scheme will be presented. Here we follow \cite{Cockburn-Shu-SIAM-JNA-1998, doi:10.1137/140956750,Wang-Shu-Zhang-2016,Haijin-wang-Shu-M2AN-2016}. In order to introduce IMEX schemes we consider the next ODE system:
\begin{align}
    u_t &= \xi(u,t) + \psi(u,t),\label{eq:ODE}\\
    u(t_0) &= u^0,\label{eq:ODEiniCond}
\end{align}
where $\xi(u,t)$ comes from the spatial discretization of convection and source terms, and $\psi(x,t)$ arises from the discretization of the diffusive terms.

Let $\{t^n=n\tau\}_{n=0}^{L}$ be the uniform partition of the time interval $[0,T]$ with time step $\tau$. Non-uniform time partitions could be adopted, although in this work, for simplicity, we consider a constant time step $\tau$. Given $u^n$, the numerical solution of \eqref{eq:ODE}-\eqref{eq:ODEiniCond} at time $t^{n}$, a general 
IMEX scheme builds the solution at time $t^{n+1}$, $u^{n+1}$, by the following means 
\begin{align}
    u^{n,0} =& u^n, \nonumber \\
    u^{n,i} =& u^n + \tau \sum_{j=0}^{i-1} \tilde{a}_{ij} \xi^{n,j} + \tau \sum_{j=0}^{i} a_{ij} \psi^{n,j}, \quad 1\leq i \leq s, \label{eq:IMEX_eq_general_ni}\\
    u^{n+1} =& u^n + \tau \sum_{i=0}^{s} \tilde{b}_{i} \xi^{n,i} + \tau \sum_{i=0}^{s} b_{i} \psi^{n,i}, \nonumber
\end{align}
where $u^{n,i}$ are the intermediate IMEX stages for $i = 1, \dots, s$ and $t^{n,i}=t^n+c_i \tau$. Additionally, $\forall \mathbf{x}\in \hat{\Omega}$, 
\begin{align}
        &\xi^{n,j} = \xi(u^{n,j},  \mathbf{x}, t^{n,j})= - \div{\mathbf{F}(u^{n,j})} +h(u^{n,j},\mathbf{x},t^{n,j}), \label{eq:general-explicit-part-1d} \\ 
        &\psi^{n,j} = \psi(u^{n,j}, \mathbf{x},t^{n,j}) = \div(\mbG(\nabla u^{n,j})).
    \end{align}
Besides, the matrices $\tilde{A} = (\tilde{a}_{ij})$ and $A=(a_{ij})$, $0\leq i,j \leq s$, are $(s+1)\times(s+1)$ matrices such that the resulting scheme is explicit in $\xi$ and implicit in $\psi$. The coefficients $c = (0,c_1,\ldots,c_{s})^{\mathrm{T}}$ are given by the relation $c_i = \sum_{j=0}^{i-1} \tilde{a}_{ij} = \sum_{j=0}^{i} a_{ij}$.
Additionally, $\tilde{b} = (\tilde{b}_0,\tilde{b}_1,\ldots,\tilde{b}_{s})$ and $b = (b_0,b_1,\ldots,b_{s})$ are considered. All in all, the general 
IMEX time marching scheme \eqref{eq:IMEX_eq_general_ni} can be expressed as the following Butcher tableaus
\begin{center}
\begin{tabular}{c|c|c}
$c$  & $\tilde{A}$ & $A$\\
\hline
&  $\tilde{b}$ & $b$ 
\end{tabular}.
\end{center}

In the subsequent Section \ref{sec:bt_gt}, for the sake of simplicity, we present the boundary treatment strategy with the succeeding particular third-order IMEX RK method, whose tableaus are given by (see \cite{CALVO2001535,Haijin-Shu-SIAM-JNA-2016})
\begin{center}
\begin{tabular}{c|cccc|cccc}
$0$ & $0$ & $0$ & $0$ & $0$ & $0$ & $0$ & $0$ & $0$ \\
$\gamma$ & $\gamma$ & $0$ & $0$ & $0$ & $0$ & $\gamma$ & $0$ & $0$ \\
$\frac{1+\gamma}{2}$ & $\frac{1+\gamma}{2}-\alpha_1$ & $\alpha_1$ & $0$ & $0$ & $0$ & $\frac{1-\gamma}{2}$ & $\gamma$ & $0$  \\
$1$ & $0$ & $1-\alpha_2$ & $\alpha_2$ & $0$ & $0$ & $\beta_1$ & $\beta_2$ & $\gamma$  \\
\hline
& $0$ & $\beta_1$ & $\beta_2$ & $\gamma$ & $0$ & $\beta_1$ & $\beta_2$ & $\gamma$  \\
\end{tabular},
\end{center}
where $\gamma$ is the middle root of $6x^3-18x^2+9x-1=0$, $\gamma= \frac{1767732205903}{4055673282236}$, $\beta_1 = -\frac32\gamma^2+4\gamma-\frac14$, $\beta_2=\frac32\gamma^2-5\gamma+\frac54$, $\alpha_1=-0.35$ and $\alpha_2 = \dfrac{\frac13 - 2 \gamma^2 - 2\beta_2 \alpha_1 \gamma}{\gamma (1-\gamma)}$. Note also that the intermediate time levels are defined as $t^{n,1}=t+\gamma\tau$, $t^{n,2}=t+\frac{1+\gamma}{2}\tau$, $t^{n,3} = t^n+\tau$.
Thus, for any function $(v,r_1,r_2) \in \hat{V}\times \hat{V} \times \hat{V}$, given  $(u^{n},q_1^{n},q_2^{n})$,  the numerical solution at the next time level is computed through three intermediate numerical solutions $(u^{n,i},q_1^{n,i},q_2^{n,i})$, $i=1,2,3$, in the following way 
\begin{align}
    (u^{n,1},v)_{\hat{\Omega}} &= (u^{n},v)_{\hat{\Omega}} + \gamma \tau (\mathcal{C}+\mathcal{H})(u^n,v) + \gamma \tau \mathcal{D}(q_1^{n,1},q_2^{n,1},v), \label{eq:imex1}\\
    (u^{n,2},v)_{\hat{\Omega}} &= (u^{n},v)_{\hat{\Omega}} + \l \frac{1+\gamma}{2} - \alpha_1 \r \tau (\mathcal{C}+\mathcal{H})(u^n,v) + \alpha_1 \tau (\mathcal{C}+\mathcal{H})(u^{n,1},v) \nonumber \\ 
    & \hspace{0.5cm}+ \frac{1-\gamma}{2} \tau \mathcal{D}(q_1^{n,1},q_2^{n,1},v) + \gamma \tau \mathcal{D}(q_1^{n,2},q_2^{n,2},v), \label{eq:imex2} \\
    (u^{n,3},v)_{\hat{\Omega}} &= (u^{n},v)_{\hat{\Omega}} + (1-\alpha_2)\tau (\mathcal{C}+\mathcal{H})(u^{n,1},v) + \alpha_2 \tau (\mathcal{C}+\mathcal{H})(u^{n,2},v) \nonumber \\
    & \hspace{0.5cm}+ \beta_1 \tau \mathcal{D}(q_1^{n,1},q_2^{n,1},v) + \beta_2 \tau \mathcal{D}(q_1^{n,2},q_2^{n,2},v) + \gamma \tau \mathcal{D}(q_1^{n,3},q_2^{n,3},v), \label{eq:imex3} \\
    (u^{n+1},v)_{\hat{\Omega}} &= (u^{n},v)_{\hat{\Omega}} + \beta_1\tau (\mathcal{C}+\mathcal{H})(u^{n,1},v) + \beta_2 \tau (\mathcal{C}+\mathcal{H})(u^{n,2},v) + \gamma \tau (\mathcal{C}+\mathcal{H})(u^{n,3},v) \nonumber \\
    & \hspace{0.5cm}+ \beta_1 \tau \mathcal{D}(q_1^{n,1},q_2^{n,1},v) + \beta_2 \tau \mathcal{D}(q_1^{n,2},q_2^{n,2},v) + \gamma \tau \mathcal{D}(q_1^{n,3},q_2^{n,3},v), \label{eq:imexFinal} \\    
    (q_1^{n,i},r)_{\hat{\Omega}} &= \mathcal{K}^1(u^{n,i},r), \quad (q_2^{n,i},r)_{\hat{\Omega}} = \mathcal{K}^2(u^{n,i},r), \qquad i=1,2,3. \label{eq:imexq12_123}
\end{align}

Concerning the boundary conditions, we distinguish two cases. On the one hand, at times $t^n$, one can directly impose the boundary condition $\omega$, i.e. $u^n = \omega(t^n)$. On the other hand, it is well known in the literature that, when an IMEX method is used together with the method of lines for the full discretization of problem \eqref{eq:system1}-\eqref{eq:system2}, the time-dependent boundary condition $\omega$ should not be directly evaluated at the intermediate stage time levels $t^{n,i}$, $1\leq i \leq s$, i.e. $u^{n,i}=\omega(t^{n,i})$. Such a naive strategy generates numerical boundary layers \cite{LUBICH1995241}. The consequence is a serious degradation in the accuracy of the scheme, i.e. the order of convergence is smaller than the conventional order of the considered time integrator. In the next section, we design a strategy to compute appropriate boundary conditions to be imposed at the intermediate IMEX time stages. Such boundary treatment techniques will allow the method to achieve the desired order accuracy when dealing with general convection-diffusion-reaction problems with Dirichlet boundary conditions. 

\section{Boundary treatment, general technique} \label{sec:bt_gt}

In this section, we describe the general boundary treatment technique for parabolic PDEs, both for linear and nonlinear cases. We present the detailed procedure in the scalar case for the IMEX third-order scheme \eqref{eq:imex1}-\eqref{eq:imexq12_123}, for the sake of simplicity. Nevertheless, the strategy can be applied to any IMEX LDG scheme, as it will be shown in Section \ref{sec:numericalAlgs}.
In the supplementary materials \ref{app:A} and \ref{app:B}, we show the application of the general technique to one-dimensional linear and nonlinear PDEs, respectively, both with source terms. Finally, in the supplementary material \ref{app:C} we extend the procedure to the two-dimensional case. All the computations for each stage are fully detailed so that the interested reader can easily reproduce all the results presented in Section \ref{sec:numericalExperiments} of numerical experiments.

We start by considering a generic nonlinear one-dimensional convection-diffusion-re\-ac\-tion PDE in the conservative form:
\begin{equation}
\label{eq:general-parabolic-1d-conservative}
    u_t + f_x(u) =g_x(u_{x}) + h(u,x,t),
\end{equation}
where $f(u)$ is a physical flux, $g_x(u_{x})$ is the diffusive term and $h(u,x,t)$ is the source term. We consider that $f$, $g$ and $h$ are smooth functions.
We can write equation \eqref{eq:general-parabolic-1d-conservative} as follows, 
\begin{align}
    u_t =& - f_x(u) +g_x(u_{x}) + h(u,x,t).    \label{eq:edp-general-1d}
\end{align}

The boundary treatment technique is based on:
\begin{enumerate}
\item Using the equations of the IMEX internal stages. 
\item Differentiating the equations of the IMEX internal stages.
\item Using the Lax-Wendroff approach (also called the Cauchy-Koval\'evskaya procedure, see, for example, \cite{DAKIN2018228,SEAL2017}) to replace certain time derivatives into space derivatives.
\item Doing the numerical approximation of some high-order spatial derivatives of the numerical solution at the boundaries.
\end{enumerate}

As presented in \eqref{eq:boundCond}, $\omega(t)$ is the Dirichlet boundary condition to be applied at time $t$. We remark that the treatment is equivalent in any boundary of the spatial domain. To proceed, we introduce the following result.

\begin{proposition}\label{approximation_tnl}
    Let $H: \Omega\times [0,T] \to \Omega $ be a $C^2$ function, then
    \begin{equation*}
        H(z(x,t+\tau),t+\tau) = H(z(x,t),t) + \mathcal{O}(\tau) \equiv \bar{H}(t, t+\tau).
    \end{equation*}
    for $z = u, u_x$.
\end{proposition}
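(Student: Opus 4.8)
The plan is to reduce the statement to a first-order Taylor expansion in time of the scalar function obtained by freezing the spatial variable $x$. Fix $x$ and define the one-variable auxiliary function $\Phi(s) := H(z(x,s),s)$ on $[0,T]$, where $z$ stands for either $u$ or $u_x$. The claim is then simply that $\Phi(t+\tau) = \Phi(t) + \mathcal{O}(\tau)$, and the approximant $\bar{H}(t,t+\tau)$ is nothing but $\Phi(t)$, i.e.\ the value $H(z(x,t),t)$ assembled from data available at time $t$ rather than at $t+\tau$.

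First I would differentiate $\Phi$ by the chain rule. Writing $\partial_1 H$ and $\partial_2 H$ for the partial derivatives of $H$ with respect to its first (state) and second (time) arguments, I obtain
\begin{equation*}
    \Phi'(s) = \partial_1 H(z(x,s),s)\, z_t(x,s) + \partial_2 H(z(x,s),s).
\end{equation*}
Since $H$ is $C^2$, hence in particular $C^1$, both partials $\partial_1 H$ and $\partial_2 H$ are continuous; and since the exact solution $u$ (so that $z=u$ or $z=u_x$) is smooth, the time derivative $z_t$ exists and is continuous. All these quantities are evaluated on the compact set $\overline{\Omega}\times[0,T]$, so $\Phi'$ is bounded uniformly in $s$ (and in $x$): there is a constant $C$, independent of $\tau$, with $|\Phi'(s)|\le C$.

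Then I would invoke the fundamental theorem of calculus (equivalently, the mean value theorem) to write
\begin{equation*}
    H(z(x,t+\tau),t+\tau) - H(z(x,t),t) = \Phi(t+\tau) - \Phi(t) = \int_t^{t+\tau} \Phi'(s)\,ds,
\end{equation*}
whose absolute value is bounded by $C\tau = \mathcal{O}(\tau)$. This is exactly the asserted estimate, and setting $\bar{H}(t,t+\tau) := H(z(x,t),t)$ closes the argument.

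The only point requiring care — and the main (mild) obstacle — is the boundedness of $z_t$. For $z=u$ it is immediate from the assumed smoothness of the exact solution, but for $z=u_x$ one needs control of the mixed derivative $u_{xt}=u_{tx}$, which relies on the standing hypothesis that $f$, $g$ and $h$ are smooth, so that the solution of \eqref{eq:general-parabolic-1d-conservative} is regular enough. Once that regularity is granted, the bound on $\Phi'$ is uniform in $x$, and the resulting $\mathcal{O}(\tau)$ is genuinely independent of the spatial point, as is needed for the subsequent boundary treatment.
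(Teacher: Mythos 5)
Your argument is correct and is essentially the paper's own proof: both rest on a first-order Taylor expansion in time of the composed map $s \mapsto H(z(x,s),s)$ via the chain rule, with the remainder controlled by the smoothness of $H$ and of the solution. The only cosmetic difference is that the paper writes out the first-order term explicitly with an $\mathcal{O}(\tau^2)$ remainder (anticipating the higher-order extension mentioned in the subsequent remark), whereas you absorb it directly into $\mathcal{O}(\tau)$ using the integral form and make the uniformity in $x$ explicit.
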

\textbf{Proof:} We just need to consider the Taylor expansion
\begin{equation*}
 H(z(t+\tau),t+\tau) = H(z(t),t) + \left[ \partial_t H\l z(t),t \r + \frac{\partial H}{\partial z} \l z(t),t \r\frac{\partial z}{\partial t}\l z(t),t \r\right] \tau + \mathcal{O}  \l \tau ^2 \r,  
\end{equation*}
which stands for any function $z$ of $u$ and $u_x$. 

\begin{remark}
    The previous proposition can be extended to an arbitrary order in $\tau$ by adding more terms of the Taylor expansion. This is needed for higher-order IMEX LDG schemes, as we will see in the following subsection. 
\end{remark}

\subsection{First IMEX stage}\label{subsec:IMEX_gen_stage1}
The first IMEX stage \eqref{eq:imex1} then reads:
\begin{equation}
    \frac{u^{n,1} - u^n}{\gamma \tau}= 
    \left[-f_x(u^n) +h(u^n,x,t^n) \right]
    %
    + 
    \left[ g_x(u_{x}^{n,1}) \right].
    \label{eq:IMEX_1-general-1d}
\end{equation}
Now, using \eqref{eq:general-explicit-part-1d}, it can be written as 
\begin{equation*}
    \frac{u^{n,1} - u^n}{\gamma \tau}= 
    \xi^{n,0}
    %
    + 
    \left[ g_x(u_{x}^{n,1}) \right].
\end{equation*}
In the latter, in order to lighten the notation, we are dropping the dependences $(u,x,t)$ of $\xi^{n,l}$.
Then, we consider that equation \eqref{eq:edp-general-1d} is also satisfied in $t^{n,1}$, so
\begin{align*}
     g_x(u_{x}^{n,1}) = &u^{n,1}_t - \xi^{n,1}.
\end{align*}
Plugging this result into \eqref{eq:IMEX_1-general-1d} and solving for $u^{n,1}$, we get
\begin{align}
    u^{n,1} =& u^n + \gamma \tau \l \xi^{n,0}  + u^{n,1}_t -\xi^{n,1}\r,  \label{eq:IMEX_1_1-general-1d}
\end{align}
where $u_t^{n,1}=\partial_t \omega(t^{n,1})$ is the time derivative of the boundary condition evaluated at time $t^{n,1}$. Now, it is important to notice that $\xi^{n,1}$ has dependence on $u_x^{n,1}$ and $u^{n,1}$. On the one hand, $u_x^{n,1}$ is unknown. To avoid the dependence on this derivative, we take derivative concerning $x$ on both sides of \eqref{eq:IMEX_1-general-1d},
    \begin{equation*}
       u_x^{n,1} =  \gamma\tau \l \frac{ u_x^{n}}{\gamma\tau} +\xi^{n,0}_x + g_{xx}(u_{x}^{n,1})\r.  
    \end{equation*}
    And we replace this derivative on $\xi^{n,1}$. We can see that in this substitution, $u^{n,1}_x$ is still involved due to the factor $g_{xx}(u^{n,1}_x)$, but it is now multiplied by $\tau$. Then, this substitution, using Proposition \ref{approximation_tnl} (with $H = g_{xx}$) leads to
    \begin{equation}
    u_x^{n,1} =  \gamma\tau \l \frac{ u_x^{n}}{\gamma\tau} +\xi^{n,0}_x + g_{xx}(u_{x}^{n,1})\r  =  \gamma\tau \l \frac{ u_x^{n}}{\gamma\tau} +\xi^{n,0}_x + g_{xx}(u_{x}^{n,0})\r + \mathcal{O}(\tau^2). \label{eq:uxn1-general-1d}
    \end{equation}
    Using this result on \eqref{eq:IMEX_1_1-general-1d} ensures a third-order time approximation for $u^{n,1}$. For higher order schemes, according to Proposition \ref{approximation_tnl}, we need to compute time derivatives of $u_x^{n,0}$. We can take derivatives on \eqref{eq:edp-general-1d} to compute these terms.
    On the other hand, the fact that $\xi^{n,1}$ has a dependence on $u^{n,1}$ forces us to solve in general a nonlinear equation. Once $u_x^{n,1}$ is correctly approximated as described just before, we propose two possible ways to deal with this equation:
    \begin{enumerate}[leftmargin=0.5cm] 
        \item Solve directly the nonlinear equation, either analytically or numerically. The resulting equation (or system of equations) reads
        \begin{equation*}
            u^{n,1}  = \zeta(u^{n,1}),
        \end{equation*}
        which could be solved with a fixed point iterative method, Newton method, or any other nonlinear solver.
        \item Use Proposition \ref{approximation_tnl}: by setting $H$ as the identity function, we can find the suitable approximation of $u^{n,1}$ in terms of $u^{n,0}$ and its derivatives. As $\xi^{n,1}$ is multiplied by $\tau$ we only need to consider $p-1$ time derivatives of $u^{n,0}$ for a $p$-th order IMEX LDG scheme. In general, for a third-order IMEX-LDG scheme the approximation of $u^{n,l}$ involved in $\xi^{n,l}$ is
        \begin{equation}\label{eq:approx_nl_a_n0}
            u^{n,l} = u^{n,0} + u_t^{n,0}c_l\tau + \mathcal{O}(\tau^2). 
        \end{equation}
        We remark that this approximation may only be substituted in the terms involved on $\xi^{n,l}$. 
    \end{enumerate}

\begin{remark}
    In \eqref{eq:IMEX_1_1-general-1d} we are assuming that the time derivatives of the boundary condition can be computed at any intermediate time. If this is not possible, one can consider a Cauchy-Koval\'evskaya procedure to compute the time derivatives with spatial derivatives in $t^{n,0}$. Also, in case we have point-wise data of the boundary condition, an interpolation procedure with enough order may be applied in order to interpolate the values at the time points where the boundary condition must be evaluated.
\end{remark}

\begin{remark} \label{remark:finiteDiff}
    Note that in equation \eqref{eq:uxn1-general-1d}, in general we need to compute a numerical approximation for $u^n_{x}$, $u^n_{xx}$ and $u^n_{xxx}$. For a $p$-th order LDG scheme, to recover the $p$-th order accuracy, we need to consider approximations of that derivatives of at least order $\mathcal{O}(\Delta x^{p-2})$, since in the boundary treatment equations for the internal IMEX stages, these spatial derivatives are all multiplied by $\tau^2$. In such a situation, the time step $\tau$ has to be $\tau=\mathcal{O}(\Delta x)$, to recover the desired accuracy also in time. Therefore, we can use the approximations $u_x(x)=\hat{u}_x(x)$ and $u_{xx}(x)=\hat{u}_{xx}(x)$, with $x\in\Gamma(\Omega)$, since the first and the second derivatives are order $p-1$ and $p-2$, respectively. For the third derivative, since $\hat{u}_{xxx}$ is only $\mathcal{O}(\Delta x^{p-3})$, we can not directly rely on the numerical derivative of the local polynomial solution of the boundary volumes. Therefore one must construct some valid numerical approximation using information from the neighbouring volumes. Below we detail first-order approximation that could be used for third-order schemes. Let $\l\hat{u}_x\r_{i\pm a} := \l\hat{u}_x\r_{i\pm a}(x \pm a \Delta_i x)$. We consider:
    \begin{itemize}
        \item If $x$ is at the left boundary: 
        \begin{align*}
            \hat{u}_{xxx}(x) &\approx \dfrac{\l\hat{u}_x\r_{i+2} - 2 \l\hat{u}_x\r_{i+1} + \l\hat{u}_x\r_{i} }{(\Delta_i x)^2}.
        \end{align*}
        \item If $x$ is at the right boundary: 
        \begin{align*}
            \hat{u}_{xxx}(x) &\approx \dfrac{\l\hat{u}_x\r_{i} - 2 \l\hat{u}_x\r_{i-1} + \l\hat{u}_x\r_{i-2} }{(\Delta_i x)^2}.
        \end{align*}
    \end{itemize}

\end{remark}

\subsection{Second IMEX stage}\label{subsec:IMEX_gen_stage2}
The second IMEX stage \eqref{eq:imex2} reads
\begin{align}
    \frac{u^{n,2}-u^n}{\tau} =& \l \frac{1 + \gamma}{2} - \alpha_1\r \xi^{n,0} + \alpha_1 \xi^{n,1} + \frac{1 - \gamma}{2}g_x(u^{n,1}_{x}) + \gamma g_x(u^{n,2}_{x}).
    \label{IMEX-stage2-general-1d}
\end{align}
As in the previous stage, we impose that the PDE \eqref{eq:edp-general-1d} is satisfied at times $t^{n,1}$ and $t^{n,2}$, then, we substitute the diffusive terms leading to
\begin{align*}
    \frac{u^{n,2}-u^n}{\tau} = \l \frac{1 + \gamma}{2} - \alpha_1\r \xi^{n,0} + \alpha_1 \xi^{n,1} + \frac{1 - \gamma}{2}\left[ u_t^{n,1} - \xi^{n,1} \right] + \gamma \left[u_t^{n,2} -\xi^{n,2} \right] .
\end{align*}
Rewriting the terms we end up with
\begin{align*}
    \frac{u^{n,2}-u^n}{\tau} = \frac{1 - \gamma}{2} u_t^{n,1} + \gamma u_t^{n,2} 
     +\l \frac{1 - \gamma}{2} -\alpha_1 \r \l \xi^{n,0} 
     - \xi^{n,1} \r
     + \gamma \xi^{n,0} 
    - \gamma \xi^{n,2}  ,
\end{align*}
where $ \xi^{n,0} - \xi^{n,1}$ can be substituted using \eqref{eq:IMEX_1_1-general-1d} which is the first IMEX stage equation. Finally
\begin{align}
     \frac{u^{n,2}-u^n}{\tau} =  \alpha_1 u_t^{n,1} + \gamma u_t^{n,2} + \l \frac{1 - \gamma}{2} -\alpha_1 \r \frac{u^{n,1} - u^n}{\gamma \tau}   
      + \gamma \xi^{n,0} 
    - \gamma \xi^{n,2} .  \label{eq:IMEX_2_2-general-1d}
\end{align}
Again, we need to solve this nonlinear system of equations which might have a dependence on $u_x^{n,2}$, which can be computed by taking derivatives on the second IMEX stage \eqref{IMEX-stage2-general-1d}:
\begin{equation*}
    u_x^{n,2} =  u_x^n + \tau \left[\l \frac{1 + \gamma}{2} - \alpha_1\r \xi_x^{n,0} + \alpha_1 \xi_x^{n,1} + \frac{1 - \gamma}{2}g_{xx}(u^{n,1}_{x}) + \gamma g_{xx}(u^{n,2}_{x}) \right]. 
\end{equation*}
According to Proposition \ref{approximation_tnl} for all the terms in $t^{n,1}$ and $t^{n,2}$ and regrouping
\begin{align*}
    u_x^{n,2} &=  u_x^n + \tau \left[ \l \frac{1 + \gamma}{2}\r \xi_x^{n,0}  + \frac{1 + \gamma}{2}g_{xx}(u^{n,0}_{x})  \right] + \mathcal{O}(\tau^2).
\end{align*}
Now notice that the last term is exactly \eqref{eq:uxn1-general-1d}, which is already computed:
\begin{equation} \label{eq:IMEX_2_2-u_x}
    u_x^{n,2} =  u_x^n +\l \frac{1 + \gamma}{2}\r \frac{u_x^{n,1} - u_x^{n,0}}{\gamma}.
\end{equation}
Then we substitute this expression on $\xi^{n,2}$ in equation \eqref{eq:IMEX_2_2-general-1d} leading to a nonlinear problem on $u^{n,2}$ that may be solved the same way as described in the first stage.
The result in \eqref{eq:IMEX_2_2-u_x} is generalized for any intermediate step of any IMEX scheme in the next Theorem \ref{theo:aproximacionetapaintermedia}.
\begin{theorem}\label{theo:aproximacionetapaintermedia}
    Let $u^{n,i}$ be the solution at the intermediate IMEX time $t^{n,i}$ 
 for $i \geq 1$ according to scheme \eqref{eq:IMEX_eq_general_ni}. Then, the approximation of the first derivative, according to Proposition \ref{approximation_tnl} is
    \begin{align*}
        u_x^{n,i} &= u_x^{n,0} + \tau \Bigg[c_{{i}} \frac{u_x^{n,1}-u_x^{n,0}}{c_1 \tau} + \sum_{j = 0}^{i-1} \tilde{a}_{ij}\l \overline{\xi_x}(t^{n,0},t^{n,j}) -\xi_x(t^{n,0})\r  \\
        & \hspace{2cm}+ \sum_{j = 0}^{i} {a}_{ij}\l \overline{\psi_{x}}(t^{n,0},t^{n,j}) - \overline{\psi_{x}}(t^{n,0},t^{n,1}) \r\Bigg].
    \end{align*}
    We recall that $\overline{\xi_x}(t^{n,0},t^{n,j})$ and $\overline{\psi_{x}}(t^{n,0},t^{n,j})$ are the approximations for the time $t^{n,j}$ in terms of $t^{n,0}$ for the explicit and implicit parts, respectively.
\end{theorem}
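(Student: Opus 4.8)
The plan is to mimic, in full generality, the computation that produced \eqref{eq:IMEX_2_2-u_x} for the second stage, replacing its \textit{ad hoc} regrouping by a systematic use of the row-sum (consistency) relations $\sum_{j=0}^{i-1}\tilde a_{ij}=c_i=\sum_{j=0}^{i}a_{ij}$. First I would differentiate the general internal-stage equation \eqref{eq:IMEX_eq_general_ni} with respect to $x$. Since $u^{n,0}=u^n$ gives $u_x^{n,0}=u_x^n$, and writing $\xi_x^{n,j}$, $\psi_x^{n,j}$ for the exact spatial derivatives of the explicit and implicit parts at the stage times, this yields the exact identity
\[
u_x^{n,i}=u_x^{n,0}+\tau\sum_{j=0}^{i-1}\tilde a_{ij}\,\xi_x^{n,j}+\tau\sum_{j=0}^{i}a_{ij}\,\psi_x^{n,j}.
\]
Next I would invoke Proposition \ref{approximation_tnl} to replace each boundary quantity evaluated at the intermediate time $t^{n,j}$ by its Taylor approximation anchored at $t^{n,0}$, that is $\xi_x^{n,j}=\overline{\xi_x}(t^{n,0},t^{n,j})+\mathcal{O}(\tau)$ and $\psi_x^{n,j}=\overline{\psi_x}(t^{n,0},t^{n,j})+\mathcal{O}(\tau)$. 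The key bookkeeping remark is that every such replacement carries an overall factor $\tau$, so the incurred error is $\mathcal{O}(\tau^2)$ and does not spoil the stated accuracy.

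Then comes a purely algebraic rearrangement. Using $\xi_x(t^{n,0})=\overline{\xi_x}(t^{n,0},t^{n,0})$, I would add and subtract $\xi_x(t^{n,0})$ inside the explicit sum and $\overline{\psi_x}(t^{n,0},t^{n,1})$ inside the implicit sum. The row-sum relations collapse the subtracted constants into a single baseline contribution $\tau c_i\bigl[\xi_x(t^{n,0})+\overline{\psi_x}(t^{n,0},t^{n,1})\bigr]$, while leaving exactly the two correction sums displayed in the statement; note that the $j=0$ term of the explicit correction and the $j=1$ term of the implicit correction vanish identically, consistent with their inclusion in the stated ranges.

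The one step requiring genuine care, and which I expect to be the main obstacle, is identifying this baseline bracket with the already-computed first-stage difference quotient, i.e. proving
\[
\xi_x(t^{n,0})+\overline{\psi_x}(t^{n,0},t^{n,1})=\frac{u_x^{n,1}-u_x^{n,0}}{c_1\tau}+\mathcal{O}(\tau).
\]
This is obtained by applying the first two steps to the index $i=1$: differentiating the first stage \eqref{eq:imex1} and using $\tilde a_{10}=c_1$ together with the row-sum $a_{10}+a_{11}=c_1$ gives $\frac{u_x^{n,1}-u_x^{n,0}}{c_1\tau}=\xi_x^{n,0}+\psi_x^{n,1}+\mathcal{O}(\tau)$, and Proposition \ref{approximation_tnl} converts $\psi_x^{n,1}$ into $\overline{\psi_x}(t^{n,0},t^{n,1})$. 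Since this identity enters only through the factor $\tau c_i$, the residual $\mathcal{O}(\tau)$ is again promoted to $\mathcal{O}(\tau^2)$ and may be absorbed into the error already carried along. The subtlety worth flagging is that this identification relies on the standard first-stage structure of the tableau (a single implicit evaluation of weight $c_1$); for a scheme violating it one should instead keep $\xi_x^{n,0}+\psi_x^{n,0}$ as the baseline, the two choices differing only at $\mathcal{O}(\tau)$. Collecting the baseline term together with the two correction sums reproduces the claimed expression for $u_x^{n,i}$, and specializing to $i=2$ recovers \eqref{eq:IMEX_2_2-u_x} as a consistency check.
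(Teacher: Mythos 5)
Your proposal is correct and follows essentially the same route as the paper's proof: differentiate the stage equation \eqref{eq:IMEX_eq_general_ni} in $x$, add and subtract the baseline quantities $\xi_x(t^{n,0})$ and $\overline{\psi_x}(t^{n,0},t^{n,1})$, collapse them via the row-sum relations into $c_i\bigl[\xi_x(t^{n,0})+\overline{\psi_x}(t^{n,0},t^{n,1})\bigr]$, and identify this bracket with $\frac{u_x^{n,1}-u_x^{n,0}}{c_1\tau}$ from the differentiated first stage. Your explicit justification of that last identification (which the paper simply asserts) and your caveat about tableaus whose first stage deviates from the standard structure are welcome refinements but do not change the argument.
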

\begin{proof}
    We take the derivative on \eqref{eq:IMEX_eq_general_ni} which leads us to:
\begin{align*}
         \frac{u_x^{n,i} - u_x^{n,0}}{\tau} =& \sum_{{j=0}}^{i-1} \tilde{a}_{{i}j} \xi_x(t^{n,j}) +  \sum_{j = 0}^{i} {a}_{{i}j} \psi_{x}(t^{n,j}) \\
        =& \sum_{j = 0}^{i-1} \tilde{a}_{ij} \xi_x(t^{n,0})   + \sum_{j = 0}^{i} {a}_{ij}\overline{\psi_{x}}(t^{n,0},t^{n,1}) \\ 
         & + \sum_{j = 0}^{i-1} \tilde{a}_{ij}\l \overline{\xi_x}(t^{n,0},t^{n,j}) -  \xi_x(t^{n,0}) \r  + \sum_{j = 0}^{i} {a}_{ij}\l \overline{\psi_{x}}(t^{n,0},t^{n,j}) - \overline{\psi_{x}}(t^{n,0},t^{n,1}) \r \\
         =& c_{i} \l  \xi_x(t^{n,0})+ \overline{\psi_{x}}(t^{n,0},t^{n,1})\r \\ 
         & + \sum_{j = 0}^{i-1} \tilde{a}_{ij}\l \overline{\xi_x}(t^{n,0},t^{n,j}) -  \xi_x(t^{n,0}) \r  + \sum_{j = 0}^{i} {a}_{ij}\l \overline{\psi_{x}}(t^{n,0},t^{n,j}) - \overline{\psi_{x}}(t^{n,0},t^{n,1}) \r \\
         =& c_{i} \frac{u_x^{n,1}-u_x^{n,0}}{c_1 \tau} \\ 
         & + \sum_{j = 0}^{i-1} \tilde{a}_{ij}\l \overline{\xi_x}(t^{n,0},t^{n,j}) - \xi_x(t^{n,0}) \r  + \sum_{j = 0}^{i} {a}_{ij}\l \overline{\psi_{x}}(t^{n,0},t^{n,j}) - \overline{\psi_{x}}(t^{n,0},t^{n,1}) \r ,
    \end{align*}
where we have taken into account that $$\frac{u_x^{n,1}-u_x^n}{c_1\tau} = \xi_x(t^{n,0}) + \overline{\psi_x}(t^{n,0},t^{n,1}) .$$

We can compute further the last term
\begin{align*}
    \overline{\psi_{x}}(t^{n,0},t^{n,j})- \overline{\psi_{x}}(t^{n,0},t^{n,1})&=\sum_{r=0}^{k-2}\frac{1}{r!}\frac{\partial^r}{\partial t^r}\psi_{x} (t^{n,0})\Big[(t^{n,j}-t^{n,0})^r - (t^{n,1}-t^{n,0})^r\Big] \\
    &=\sum_{r=0}^{k-2}\frac{1}{r!}\frac{\partial^r}{\partial t^r}\psi_{x} (t^{n,0})\Big[(c_j \tau)^r - (c_1\tau)^r\Big] \\
    &=\sum_{r=0}^{k-2}\frac{\tau^r}{r!}(c_j^r - c_1^r)\frac{\partial^r}{\partial t^r}\psi_{x} (t^{n,0}).    
\end{align*}
    
\end{proof}

\begin{remark}
    For a $p$-th order IMEX scheme, it is needed at least a $p-2$-th order approximation of the spatial derivative. Then, for second and third-order IMEX schemes it is only needed a first-order approximation using Proposition \ref{approximation_tnl}. In this case, last terms vanished due to $\overline{H}(t^{n,j}, t^{n,0}) = \overline{H}(t^{n,1}, t^{n,0})$. 
\end{remark}

\subsection{Third IMEX stage}\label{subsec:IMEX_gen_stage3}
The third IMEX stage \eqref{eq:imex3} reads
\begin{equation}
    \frac{u^{n,3} - u^n}{\tau} = \l 1 -\alpha_2 \r  \xi^{n,1}  + \alpha_2 \xi^{n,2} + \beta_1 g_x( u_x^{n,1}) + \beta_2 g_x( u_x^{n,2}) +\gamma g_x( u_x^{n,3}). \label{eq:imexStep3-general-1d}
\end{equation}
We impose the PDE to be valid on $t^{n,i}$ for $i=1,2,3$ and substitute the diffusive terms
\begin{align*}
    \frac{u^{n,3} - u^n}{\tau} =& \l 1 -\alpha_2 \r  \xi^{n,1}  + \alpha_2  \xi^{n,2}  + \beta_1 \l u_t^{n,1} - \xi^{n,1}\r + \beta_2  \l u_t^{n,2} - \xi^{n,2}\r +\gamma  \l u_t^{n,3} - \xi^{n,3}\r \\
    =& \beta_1 u_t^{n,1} + \beta_2 u_t^{n,2} + \beta_3 u_t^{n,3} - \l \alpha_2 + \beta_2 -1 \r \xi^{n,1} - \l \beta_2 - \alpha_2 \r \xi^{n,2} - \gamma \xi^{n,3}.
\end{align*}
Then, using \eqref{eq:IMEX_1_1-general-1d} and \eqref{eq:IMEX_2_2-general-1d} we can substitute $\xi^{n,i}$:
\begin{align*}
      \xi^{n,1} &= \frac{u^{n,1} - u^n}{\gamma \tau} - \xi^{n,0} - u^{n,1}_t, \\
     \xi^{n,2} &= \frac{u^{n,2}-u^n}{\gamma \tau} - \dfrac{\alpha_1}{\gamma } u_t^{n,1}- u_t^{n,2}- \l \frac{1 - \gamma}{2} -\alpha_1 \r \frac{u^{n,1} - u^n}{\gamma^2 \tau}  - \xi^{n,0}.
\end{align*}
Substituting these expressions, regrouping and using that $\beta_1 + \beta_2 + \gamma = 1$:
\begin{align}
    \frac{u^{n,3} - u^n}{\tau} =& \left[ 1-\alpha_2 - \l \beta_2 - \alpha_2 \r \frac{\alpha_1}{\gamma} \right]u_t^{n,1} + \alpha_2u_t^{n,2} + \gamma u_t^{n,3} +\l \alpha_2 + \beta_1 -1 \r \frac{u^{n,1} - u^n}{\gamma \tau} \nonumber \\ 
    &+ \l \beta_2 - \alpha_2 \r \left[  \frac{u^{n,2}-u^n}{\gamma \tau}  - \l \frac{1 - \gamma}{2} -\alpha_1 \r \frac{u^{n,1} - u^n}{\gamma^2 \tau} \right]  \nonumber \\
    &+\gamma \xi^n - \gamma \xi^{n,3} .\label{eq:IMEX_3_general}
\end{align}
Again, we have a dependency of $u_x^{n,3}$ on $\xi^{n,3}$ which can be vanished by taking the derivative in \eqref{eq:imexStep3-general-1d}, taking into account the order on $\tau$ and $\beta_1 + \beta_2 + \gamma = 1$, it results in
\begin{align}\label{eq:approx_x_n3_x_n}
     \frac{u_x^{n,3}-u_x^n}{\tau} = \xi_x^{n,0}  +  g_{xx}(u_x^{n,0}) + \mathcal{O}\l \tau \r = \frac{u_{x}^{n,1}-u_{x}^{n,0}}{\gamma \tau} + \mathcal{O}\l \tau \r,
\end{align}
which is the result we stated in Theorem \ref{theo:aproximacionetapaintermedia}.

Finally, substituting this result on $\xi^{n,3}$ involved on \eqref{eq:IMEX_3_general} leads to a nonlinear problem on $u^{n,3}$ which, again, might be solved as in the first stage.

\section{Numerical algorithms for the boundary treatment}\label{sec:numericalAlgs}

The computations described in the previous section can become quite convoluted and tedious to be done by hand, in particular for the last internal IMEX stages. On top of that, those computations are exhausting for schemes of higher order. 
In this section, we propose a completely algorithmic version of the presented technique, that allows us to fully automate the boundary treatment strategy,
by letting the computer numerically perform all the computations. The proposed general algorithms can be applied to any IMEX scheme, as long as the IMEX tableaus are supplied.

Algorithm \ref{alg:algShu} is a generalization of the procedure described previously for IMEX time integration schemes of arbitrary order. For third-order IMEX schemes, as stated before, in the Taylor expansions in time for passing the space derivatives of $u^{n,i}$ to time $t^{n,0}$, it is enough to truncate the series to its first zero order term. For schemes of higher order, more terms have to be considered. In such a situation, mixed (space-time) derivatives of $u^{n,i}$ have to be computed. The way we propose to deal with these terms is to make use of the PDE itself to convert the mixed derivatives to just spatial derivatives, which can be computed using the LDG solution as done before. Besides, we point out that in this algorithm, the terms $u^{n,i}$ coming from the source term of the PDE are denoted as
$u^{n,i}_h$. To avoid having to solve, eventually, nonlinear equations numerically, Taylor expansion to time $t^{n,0}$ is here considered. Finally, notice that stages 3, 5, and 7 are the suitable Taylor expansions of the terms approximated according to Proposition \ref{approximation_tnl}, and stage 6 is the result proved in Theorem \ref{theo:aproximacionetapaintermedia}. All in all, Algorithm \ref{alg:algShu} is a generalization of the work proposed by Shu et al. which opens the door to naturally implement the boundary treatment strategy detailed in Section \ref{sec:bt_gt}. {Therefore, the procedure is fully numerical, letting the machine deal with the computations, without the hassle of the analytical calculations and the corresponding simplifications}.

\begin{algorithm}
    \caption{Boundary treatment approximating the terms $\xi_x^{n,i}$ with $\xi_x^{n,0}$, and $\psi_x^{n,i}$ with $\psi_x^{n,0}$, $u_t = \xi(u,t) + \psi(u,t)$, one-dimensional problem.}
    \begin{algorithmic}[1]
    \STATE $\xi^{n} = \xi^{n,0} \gets \xi^{n,0}(\hat{u}_x^{n,0},u^{n,0}) = \xi^{n,0}(\hat{u}_x^{n},\omega(t^n)) $\vspace{0.2cm}
    \FOR {$i \gets 1...,s$} \vspace{0.2cm}
        \STATE $ {u_{xx}^{n,i-1}} \gets \displaystyle \sum_{r=0}^{k-2} \frac{\tau^r}{r!} \partial_t^r \hat{u}_{xx}^{n,0} c_{i-1}^r$ (Taylor expansion). For 3rd order ($k=2$) $,{u_{xx}^{n,i-1}} \gets \hat{u}_{xx}^{n,0}$ \vspace{0.2cm}
        \STATE ${\xi_x^{n,i-1}} \gets \xi_x^{n,i-1}(u_{xx}^{n,i-1},{u_x^{n,i-1}})$ \vspace{0.2cm}
        \STATE ${\psi_x^{n,i}} \gets \displaystyle \sum_{r=0}^{k-2} \frac{\tau^r}{r!} \partial_t^r {\psi}^{n,0}_{x} c_i^r$ (Taylor expansion). For 3rd order ($k=2$) $,{\psi_x^{n,i}} \gets \hat{\psi}_x^{n,0}$ \vspace{0.2cm}
        \STATE ${u_x^{n,i}} \gets \hat{u}_x^n + \tau \displaystyle\sum_{j=0}^{i-1} \tilde{a}_{ij} {\xi_x^{n,j}} + \tau \sum_{j=0}^{i} a_{ij} {\psi_x^{n,j}}$ \vspace{0.2cm}
        \STATE $\displaystyle {u_h^{n,i}} \gets \sum_{r=0}^{k-1} \frac{\tau^r c_i^r}{r!}\partial_t^r u^n$ (Taylor expansion). For 3rd order, { \small ${u_h^{n,i}} \gets u^n + c_i \tau u_t^n =  \omega(t^n) + c_i \tau \omega_t(t^n)$ } \vspace{0.2cm}
               \STATE ${\xi^{n,i}} \gets \xi^{n,i}({u_x^{n,i}},{u_h^{n,i}})$ \vspace{0.2cm}
        \STATE $\displaystyle u^{n,i} \gets u^n + \tau \sum_{j=0}^{i-1} \tilde{a}_{ij} {\xi^{n,j}} + \tau \sum_{j=0}^{i} a_{ij} (u_t^{n,j}-{\xi^{n,j}})$  \vspace{0.2cm}
    \ENDFOR
    \end{algorithmic}
     \label{alg:algShu}
\end{algorithm}

At this point, in Algorithm \ref{alg:generic1d} we present an improvement of Algorithm \ref{alg:algShu}. Firstly, we note that it is not necessary to move from time $t^{n,i}$ to time $t^{n,0}$ by Taylor expansion of the $\xi$ terms treated explicitly by IMEX. They can be directly evaluated in their natural IMEX times $t^{n,i}$. The advantages are twofold. On the one hand, errors coming from the truncation in the Taylor series are avoided. On the other hand, the implementation is easier, especially for IMEX schemes of order greater than three, since we circumvent the computation of time-space derivatives for the $\xi$ terms. The second improvement in Algorithm \ref{alg:generic1d} is related to the treatment of the $\psi_x$ terms treated implicitly by IMEX. At the $i$-th IMEX intermediate stage, the terms $\psi_x^{n,j}$ for $j<i$ are directly computed using the corresponding intermediate LDG solution at the same $t^{n,j}$ time. Besides, for the terms $\psi_x^{n,i}$, we move them to the immediately preceding time $t^{n,i-1}$ by Taylor expansion. Finally, note that the terms $u^{n,i}$ coming from the reaction part of the PDE, denoted in the algorithms by $u_h^{n,i}$, are treated in the same way as in Algorithm \ref{alg:algShu}. The reason behind this choice is that the Taylor expansions of these terms only require the computation of analytical time derivatives of $\omega$.

\begin{algorithm}
    \caption{Boundary treatment just approximating the terms $\psi_x^{n,i}$ with $\psi_x^{n,i-1}$, $u_t = \xi(u,t) + \psi(u,t)$, one-dimensional problem.}
    \begin{algorithmic}[1]
    \STATE $\xi^{n} = \xi^{n,0} \gets \xi^{n,0}(\hat{u}_x^{n,0},u^{n,0}) = \xi^{n,0}(\hat{u}_x^{n},\omega(t^n)) $ \vspace{0.2cm}
    \FOR {$i \gets 1...,{s}$} \vspace{0.2cm}
        \STATE $\xi_x^{n,i-1} \gets \xi_x^{n,i-1}(\hat{u}_{xx}^{n,i-1},{u_x^{n,i-1}})$ \vspace{0.2cm}
        \STATE ${\psi_x^{n,i}} \gets \displaystyle \sum_{r=0}^{k-2} \frac{\tau^r}{r!} \partial_t^r \hat{\psi}^{n,i-1}_{x} (c_i-c_{i-1})^r$.  For 3rd order ($k=2$) ${\psi_x^{n,i}} \gets \hat{\psi}_x^{n,i-1}$ \vspace{0.2cm}
        \STATE ${u_x^{n,i}} \gets \hat{u}_x^n + \tau \displaystyle\sum_{j=0}^{i-1} \tilde{a}_{ij} \xi_x^{n,j} + \tau \sum_{j=0}^{i-1} a_{ij} \hat{\psi}_x^{n,j}+a_{ii}{\psi_x^{n,i}}$ \vspace{0.2cm}
        \STATE $\displaystyle {u_h^{n,i}} \gets \sum_{r=0}^{k-1} \frac{\tau^r c_i^r}{r!}\partial_t^r u^n$. For 3rd order, ${u_h^{n,i}} \gets u^n + c_i \tau u_t^n = \omega(t^n) + c_i \tau \omega_t(t^n)$  \vspace{0.2cm}
        \STATE $\xi^{n,i} \gets \xi^{n,i}({u_x^{n,i}},{u_h^{n,i}})$ \vspace{0.2cm}
        \STATE $\displaystyle u^{n,i} \gets u^n + \tau \sum_{j=0}^{i-1} \tilde{a}_{ij} \xi^{n,j} + \tau \sum_{j=0}^{i} a_{ij} (u_t^{n,j}-\xi^{n,j})$ \vspace{0.2cm}
    \ENDFOR
    \end{algorithmic}
    \label{alg:generic1d}
\end{algorithm}

Finally, the extension of Algorithm \ref{alg:generic1d} to the high dimensional case is straightforward. It is shown in Algorithm \ref{alg:generic2d} for the two-dimensional case in structured meshes.

\begin{algorithm}
    \caption{Boundary treatment, $u_t = \xi(u,t) + \psi(u,t)$, two-dimensional problem.}
    \begin{algorithmic}[1]
    \STATE $\xi^{n} = \xi^{n,0} \gets \xi^{n,0}(\hat{u}_x^{n,0},\hat{u}_y^{n,0},u^{n,0}) = \xi^{n,0}(\hat{u}_x^{n},\hat{u}_y^{n},\omega(t^n)) $ \vspace{0.2cm}
    \FOR {$i \gets 1...,{s}$} \vspace{0.2cm}
        \STATE ${\xi_x^{n,i-1}} \gets \xi_x^{n,i-1}(\hat{u}_{xx}^{n,i-1},\hat{u}_{yx}^{n,i-1},u_x^{n,i-1})$, ${\psi_x^{n,i}} \gets \displaystyle \sum_{r=0}^{k-2} \frac{\tau^r}{r!} \partial_t^r \hat{\psi}^{n,i-1}_{x} (c_i-c_{i-1})^r$ \vspace{0.2cm}
        \STATE ${u_x^{n,i}} \gets \hat{u}_x^n + \tau \displaystyle\sum_{j=0}^{i-1} \tilde{a}_{ij} \xi_x^{n,j} + \tau \sum_{j=0}^{i-1} a_{ij} \hat{\psi}_x^{n,j}+a_{ii}\psi_x^{n,i}$ \vspace{0.2cm}
        \STATE ${\xi_y^{n,i-1}} \gets \xi_y^{n,i-1}(\hat{u}_{xy}^{n,i-1},\hat{u}_{yy}^{n,i-1},u_y^{n,i-1})$, ${\psi_y^{n,i}} \gets \displaystyle \sum_{r=0}^{k-2} \frac{\tau^r}{r!} \partial_t^r \hat{\psi}^{n,i-1}_{y} (c_i-c_{i-1})^r$ \vspace{0.2cm}
        \STATE $u_y^{n,i} \gets \hat{u}_y^n + \tau \displaystyle\sum_{j=0}^{i-1} \tilde{a}_{ij} \xi_y^{n,j} + \tau \sum_{j=0}^{i-1} a_{ij} \hat{\psi}_y^{n,j}+a_{ii}{\psi_y^{n,i}}$ \vspace{0.2cm}
        \STATE $\displaystyle u_h^{n,i} \gets \sum_{r=0}^{k-1} \frac{\tau^r c_i^r}{r!}\partial_t^r u^n$ \vspace{0.2cm}
        \STATE ${\xi^{n,i}} \gets \xi^{n,i}({u_x^{n,i}},{u_y^{n,i}},{u_h^{n,i}})$ \vspace{0.2cm}
        \STATE $\displaystyle u^{n,i} \gets u^n + \tau \sum_{j=0}^{i-1} \tilde{a}_{ij} \xi^{n,j} + \tau \sum_{j=0}^{i} a_{ij} (u_t^{n,j}-\xi^{n,j})$ \vspace{0.2cm}
    \ENDFOR
    \end{algorithmic}
    \label{alg:generic2d}
\end{algorithm}


\section{Numerical experiments}\label{sec:numericalExperiments}

In this section, we present four numerical experiments to empirically validate that, using the boundary treatment procedure proposed in this work the convergence order is successfully restored.

In all the numerical tests presented in this section, there is no substantial numerical difference in considering the approaches ``1'' or ``2'' when dealing with the nonlinear equations resulting at the end of the boundary treatment of all IMEX stages. Besides, also as expected, no substantial numerical differences are observed between Algorithms \ref{alg:algShu} and \ref{alg:generic1d}, the second one being easier to implement, as explained before. 

The tables of errors and convergence orders will be computed for the $L^1$ norm, as well as for the more demanding $L^2$ and $L^\infty$ norms.

On the one hand, for the first three numerical experiments, the third order was considered, using the IMEX scheme presented in equations \eqref{eq:imex1}-\eqref{eq:imexq12_123}. On the other hand, the ARK4(3)6L[2]SA–ERK and ESDIRK tableaus of \cite{KENNEDY2003139} are employed for the last fourth-order numerical test.

\subsection{One-dimensional convective heat equation, third-order test} \label{subsec:1dheat}

Here we solve the one-dimensional linear PDE $u_t + \partial_x f(u) = \partial_x g(u_x) + h(u)$, where $f = -C u$, $g = D u_x$, $h = (D-1)u$, $C,D\in\R$. For this problem, the boundary treatment strategy is explicitly detailed in the supplementary material \ref{app:A}.
For the spatial domain, we consider $\Omega=[-1,1]$, while the computing time is taken as $T=5$. The corresponding time-dependent Dirichlet boundary conditions are given by the exact solution $u(x,t)=e^{-t} \sin(x+Ct)$.

In Table \ref{tb-1d-linear-source} we show the errors and orders of convergence for the third-order IMEX LDG scheme, without and with boundary treatment. Firstly, in the case without boundary treatment, boundary conditions are naively imposed, meaning that boundary conditions are directly evaluated at the intermediate time steps of the IMEX RK time marching method. As expected, the third-order convergence is far from being achieved with this conventional strategy, especially for the $L^2$ and $L^\infty$ norms. Secondly, in the case of boundary treatment, the technique explained in Section \ref{sec:numericalAlgs} is considered. In this case, the method recovers the third-order accuracy for the three considered norms.

Figure \ref{fig:BL_O3} shows the single time step ($t=\tau$) and the final time ($t=T$) errors in space in the cases without and with boundary treatment (w/o bt and w bt, respectively). The huge spatial errors in the boundary regions, appearing when no boundary treatment is considered, vanish with the proposed treatment.

\begin{table}[ht]
\caption{Errors and orders of accuracy for PDE $u_t + \partial_x f(u) = \partial_x g(u_x) + h(u)$, where $f = -C u$, $g = D u_x$, $h = (D-1)u$, considering $C=0.1$, $D=2$. $CFL=0.25$. Spatial domain $[-1,1]$, computing time $T=5$. Dirichlet boundary conditions given by the exact solution $u(x,t)=e^{-t} \sin(x+Ct)$.}
\begin{center}
\begin{tabular}{ccccccc}
    \hline
    $N$ & \multicolumn{6}{c}{Without boundary treatment}\\
        \cline{2-7}
        & $L^1$ error & $L^1$ order & $L^2$ error & $L^2$ order & $L^\infty$ error & $L^\infty$ order \\
    \hline
    $5$ & $5.03e-05$ & $-$ & $4.86e-05$ & $-$ & $9.60e-05$ & $-$ \\
    \hline
    $10$ & $9.11e-06$ & $2.47$ & $9.56e-06$ & $2.35$ & $2.31e-05$ & $2.06$ \\
    \hline    
    $20$ & $1.63e-06$ & $2.48$ & $1.92e-06$ & $2.32$ & $5.70e-06$ & $2.02$ \\
    \hline    
    $40$ & $2.85e-07$ & $2.52$ & $3.93e-07$ & $2.29$ & $1.42e-06$ & $2.01$ \\
    \hline    
    $80$ & $4.96e-08$ & $2.52$ & $8.11e-08$ & $2.28$ & $3.57e-07$ & $1.99$ \\
    \hline    
    $160$ & $8.61e-09$ & $2.53$ & $1.69e-08$ & $2.26$ & $8.96e-08$ & $1.99$ \\
    \hline
    $320$ & $1.50e-09$ & $2.52$ & $3.53e-09$ & $2.26$ & $2.25e-08$ & $1.99$ \\
    \hline    
    $640$ & $2.62e-10$ & $2.52$ & $7.41e-10$ & $2.25$ & $5.64e-09$ & $2.00$ \\
    \hline    
\end{tabular}

\begin{tabular}{ccccccc}
    \hline
    $N$ & \multicolumn{6}{c}{With boundary treatment}\\
        \cline{2-7}
        & $L^1$ error & $L^1$ order & $L^2$ error & $L^2$ order & $L^\infty$ error & $L^\infty$ order \\
    \hline
    $5$ & $1.17e-05$ & $-$ & $1.39e-05$ & $-$ & $3.18e-05$ & $-$ \\
    \hline
    $10$ & $1.18e-06$ & $3.31$ & $1.30e-06$ & $3.42$ & $3.72e-06$ & $3.1$ \\
    \hline    
    $20$ & $1.41e-07$ & $3.07$ & $1.42e-07$ & $3.19$ & $4.55e-07$ & $3.03$ \\
    \hline    
    $40$ & $1.82e-08$ & $2.95$ & $1.70e-08$ & $3.06$ & $5.67e-08$ & $3.00$ \\
    \hline    
    $80$ & $2.37e-09$ & $2.94$ & $2.13e-09$ & $3.00$ & $7.15e-09$ & $2.99$ \\
    \hline    
    $160$ & $3.08e-10$ & $2.94$ & $2.69e-10$ & $2.99$ & $9.03e-10$ & $2.99$ \\
    \hline
    $320$ & $3.98e-11$ & $2.95$ & $3.40e-11$ & $2.98$ & $1.14e-10$ & $2.99$ \\
    \hline    
    $640$ & $5.08e-12$ & $2.97$ & $4.27e-12$ & $2.99$ & $1.44e-11$ & $2.98$ \\
    \hline    
\end{tabular}
\end{center}
\label{tb-1d-linear-source} 
\end{table}

\begin{figure}[!ht]
\centering
\subfigure{\includegraphics[scale=0.41]{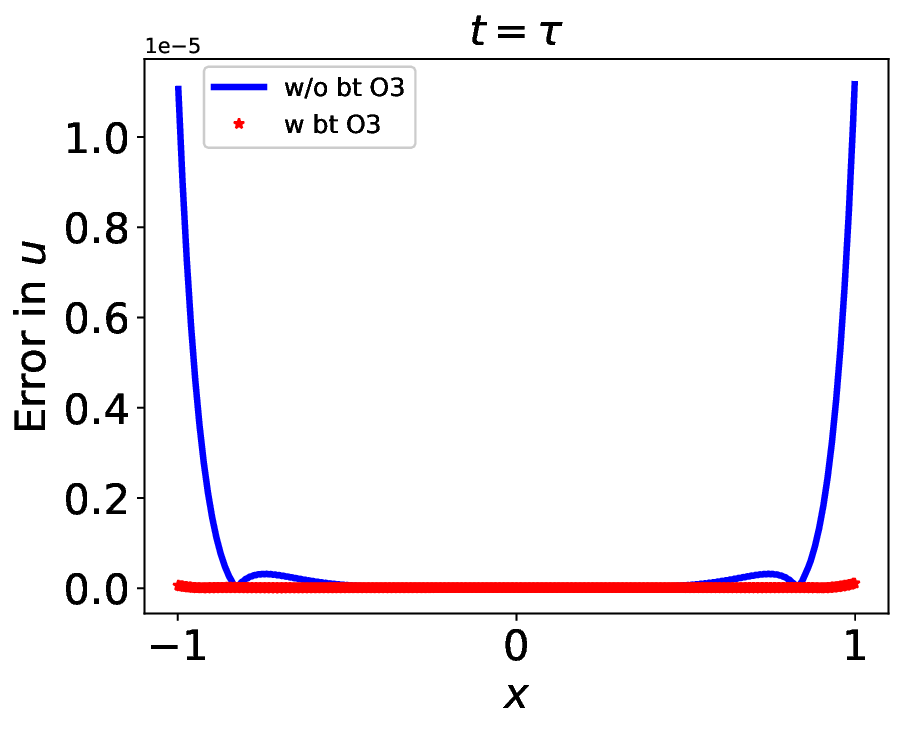}}
\subfigure{\includegraphics[scale=0.41]{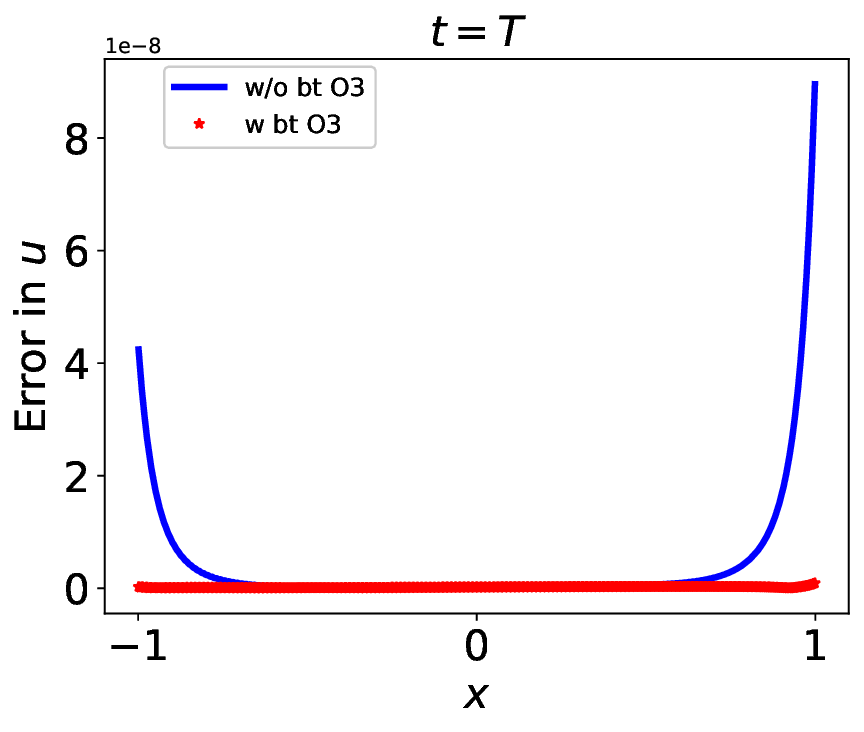}}
\caption{$|\hat{u}-u|$ for $N=160$ with the same data of Table \ref{tb-1d-linear-source}.}
\label{fig:BL_O3}
\end{figure}

\subsection{One-dimensional diffusive Burgers equation, third-order test} \label{subsec:1dburgers}
Now we solve the one-dimensional nonlinear PDE $u_t + \partial_x f(u) = \partial_x g(u_x) + h(u,x,t)$, where $f = \frac{u^2}{2}$, $g = D u_x$, $h = p(x,t)u$, considering $D=2$ and $p(x,t)=D - 1 + e^{-t} \cos x$. The spatial domain is taken as $\Omega=[-1,1]$ and $T=5$ is the computing time. The corresponding Dirichlet boundary conditions are given by the exact solution $u(x,t)=e^{-t} \sin x$. The boundary treatment for this case is fully presented in the supplementary material \ref{app:B}.

\begin{table}[ht]
\caption{Errors and orders of accuracy for PDE $u_t + \partial_x f(u) = \partial_x g(u_x) + h(u,x,t)$, where $f = \frac{u^2}{2}$, $g = D u_x$, $h = p(x,t)u$, considering $D=2$ and $p(x,t)=D - 1 + e^{-t} \cos x$. $CFL=0.4$. Spatial domain $[-1,1]$, computing time $T=5$. Dirichlet boundary conditions given by the exact solution $u(x,t)=e^{-t} \sin x$.}
\begin{center}
\begin{tabular}{ccccccc}
    \hline
    $N$ & \multicolumn{6}{c}{Without boundary treatment}\\
        \cline{2-7}
        & $L^1$ error & $L^1$ order & $L^2$ error & $L^2$ order & $L^\infty$ error & $L^\infty$ order \\
    \hline
    $5$ & $3.10e-06$ & $-$ & $2.30e-06$ & $-$ & $2.95e-06$ & $-$ \\
    \hline
    $10$ & $3.75e-07$ & $3.05$ & $2.90e-07$ & $2.99$ & $5.26e-07$ & $2.49$ \\
    \hline    
    $20$ & $5.19e-08$ & $2.85$ & $4.35e-08$ & $2.74$ & $1.17e-07$ & $2.17$ \\
    \hline    
    $40$ & $7.43e-09$ & $2.80$ & $7.55e-09$ & $2.53$ & $2.88e-08$ & $2.02$ \\
    \hline    
    $80$ & $1.10e-09$ & $2.76$ & $1.45e-09$ & $2.38$ & $7.25e-09$ & $1.99$ \\
    \hline    
    $160$ & $1.66e-10$ & $2.73$ & $2.94e-10$ & $2.30$ & $1.84e-09$ & $1.98$ \\
    \hline
    $320$ & $2.58e-11$ & $2.69$ & $6.09e-11$ & $2.27$ & $4.66e-10$ & $1.98$ \\
    \hline    
    $640$ & $4.10e-12$ & $2.65$ & $1.27e-11$ & $2.26$ & $1.18e-10$ & $1.98$ \\
    \hline    
\end{tabular}

\begin{tabular}{ccccccc}
    \hline
    $N$ & \multicolumn{6}{c}{With boundary treatment}\\
        \cline{2-7}
        & $L^1$ error & $L^1$ order & $L^2$ error & $L^2$ order & $L^\infty$ error & $L^\infty$ order \\
    \hline
    $5$ & $3.03e-06$ & $-$ & $2.19e-06$ & $-$ & $2.19e-06$ & $-$ \\
    \hline
    $10$ & $3.47e-07$ & $3.13$ & $2.49e-07$ & $3.14$ & $2.24e-07$ & $3.29$ \\
    \hline    
    $20$ & $4.23e-08$ & $3.04$ & $3.05e-08$ & $3.03$ & $2.81e-08$ & $2.99$ \\
    \hline    
    $40$ & $5.26e-09$ & $3.01$ & $3.79e-09$ & $3.01$ & $3.51e-09$ & $3.00$ \\
    \hline    
    $80$ & $6.56e-10$ & $3.00$ & $4.73e-10$ & $3.00$ & $4.39e-10$ & $3.00$ \\
    \hline    
    $160$ & $8.20e-11$ & $3.00$ & $5.92e-11$ & $3.00$ & $5.48e-11$ & $3.00$ \\
    \hline
    $320$ & $1.03e-11$ & $2.99$ & $7.39e-12$ & $3.00$ & $6.86e-12$ & $3.00$ \\
    \hline    
    $640$ & $1.28e-12$ & $3.01$ & $9.24e-13$ & $3.00$ & $8.57e-13$ & $3.00$ \\
    \hline    
\end{tabular}

\end{center}
\label{tb-Burgers-withsource} 
\end{table}

In Table \ref{tb-Burgers-withsource} errors and orders of convergence for the third-order IMEX LDG scheme are shown. Once more, the solver without a proper boundary treatment suffers from the order reduction phenomenon. However, the designed boundary treatment strategy can successfully restore third-order accuracy, even in this nonlinear problem with huge temporal variations at the boundaries.

\subsection{Two-dimensional convective heat equation, third-order test\label{sec:exp2d}}
Here the numerical solution of the following two-dimensional linear PDE is carried out
\begin{equation*}
    u_t + \partial_x f_1(u) + \partial_y f_2(u) = \partial_x g_1(u_x, u_y) + \partial_y g_2(u_x, u_y) + h(u),
\end{equation*}
with $f_1 = -Cu$, $f_2 = -Cu$, $g_1 = D u_x$, $g_2 = Du_y$ and $h = (2D-1)u$. $C\in\R$ represents the convection coefficients while $D\in\R$ the diffusion parameters. For the spatial domain we consider $\Omega = [-1,1]\times[-1,1]$, while the computing time is taken as $T=5$. The corresponding Dirichlet boundary conditions are given by the exact solution $u(x,y,t)=D e^{-t}\sin(x+Ct)\cos(y+Ct)$. The boundary treatment strategy for this problem is fully detailed in the supplementary material \ref{app:C}.

\begin{table}[ht]
\caption{Errors and orders of accuracy for PDE $u_t + \partial_x f_1(u) + \partial_y f_2(u) = \partial_x g_1(u_x, u_y) + \partial_y g_2(u_x, u_y) + h(u)$, where $f_1 = -C u$, $f_2 = -C u$, $g_1 = D u_x$, $g_2 = D u_y$, $h = (2D-1)u$, considering $C=0.1$, $D=1$. $CFL=0.2$. Spatial domain $[-1,1]\times[-1,1]$, computing time $T=5$. Dirichlet boundary conditions given by the exact solution $u(x,y,t)=e^{-t}\sin(x+Ct)\cos(y+Ct)$.}
\begin{center}
\begin{tabular}{ccccccc}
    \hline
    $N,M$ & \multicolumn{6}{c}{Without boundary treatment}\\
        \cline{2-7}
        & $L^1$ error & $L^1$ order & $L^2$ error & $L^2$ order & $L^\infty$ error & $L^\infty$ order \\
    \hline
    $5$ & $8.74e-06$ & $-$ & $6.43e-06$ & $-$ & $1.22e-05$ & $-$ \\
    \hline
    $10$ & $1.52e-06$ & $2.52$ & $1.29e-06$ & $2.32$ & $3.15e-06$ & $1.95$ \\
    \hline    
    $20$ & $2.61e-07$ & $2.54$ & $2.63e-07$ & $2.29$ & $8.12e-07$ & $1.96$ \\
    \hline    
    $40$ & $4.50e-08$ & $2.54$ & $5.43e-08$ & $2.28$ & $2.12e-07$ & $1.94$ \\
    \hline    
    $80$ & $7.78e-09$ & $2.53$ & $1.13e-08$ & $2.26$ & $5.45e-08$ & $1.96$ \\
    \hline    
\end{tabular}

\begin{tabular}{ccccccc}
    \hline
    $N$ & \multicolumn{6}{c}{With boundary treatment}\\
        \cline{2-7}
        & $L^1$ error & $L^1$ order & $L^2$ error & $L^2$ order & $L^\infty$ error & $L^\infty$ order \\
    \hline
    $5$ & $4.28e-06$ & $-$ & $2.43e-06$ & $-$ & $2.62e-06$ & $-$ \\
    \hline
    $10$ & $5.35e-07$ & $3.00$ & $2.97e-07$ & $3.03$ & $2.70e-07$ & $3.28$ \\
    \hline    
    $20$ & $6.74e-08$ & $2.99$ & $3.74e-08$ & $2.99$ & $3.22e-08$ & $3.07$ \\
    \hline    
    $40$ & $8.46e-09$ & $2.99$ & $4.71e-09$ & $2.99$ & $4.06e-09$ & $2.99$ \\
    \hline    
    $80$ & $1.06e-09$ & $3.00$ & $5.91e-10$ & $2.99$ & $5.08e-10$ & $3.00$ \\
    \hline    
\end{tabular}

\end{center}
\label{tb-linear2d-sourceterms}
\end{table}

In Table \ref{tb-linear2d-sourceterms}  we show the errors and orders of accuracy for the third-order IMEX LDG scheme, firstly without applying the boundary treatment, and later applying it. This example shows by numerical verification that the designed boundary treatment strategy is also able to recover the third-order accuracy even for multidimensional convection-diffusion-reaction problems with time-dependent Dirichlet boundary conditions.

\subsection{One-dimensional convective heat equation, fourth-order test \label{sec:expO4}}
Finally, this section is devoted to numerically showing that the proposed boundary treatment strategy extends to integrators of higher order than three. Here we solve the linear PDE of Section \ref{subsec:1dheat} with $D=1$.

In this fourth-order numerical experiment, as previously stated, an additional term (up to order one) in the Taylor expansions in time of $u_{xx}^{n,i}$ and $u_{xxx}^{n,i}$ needs to be considered. The mixed time-space derivatives are moved to space derivatives applying the Lax-Wendroff procedure, thus yielding to third, fourth, and fifth-order derivatives. All of them need\footnote{The third order derivative appears also outside this Taylor expansion in time. In such places, it must be approximated at least at second order. We computed it as $(\hat{u}_{xx})_x$, using second-order finite difference approximations of the first derivative (see the previous Remark \ref{remark:finiteDiff}).} to be computed at least at first order since they are multiplied by a constant of $\mathcal{O}(\tau^3)$ in the global boundary treatment strategy and we take $\tau = \mathcal{O}(\Delta x)$. Thus, $\hat{u}_{xxx}$ is directly given by the polynomial solution. The fourth derivative is computed as $(\hat{u}_{xxx})_x$ with the first-order finite difference approximation of the first derivative along the boundary and its closest neighbor cell. Finally, the fifth derivative is computed with the first-order finite difference approximation of the second derivative of the previously computed first-order $\hat{u}_{xxx}$ along the boundary and its two closest neighbor cells.

\begin{figure}[!htb]
\centering
\subfigure{\includegraphics[scale=0.41]{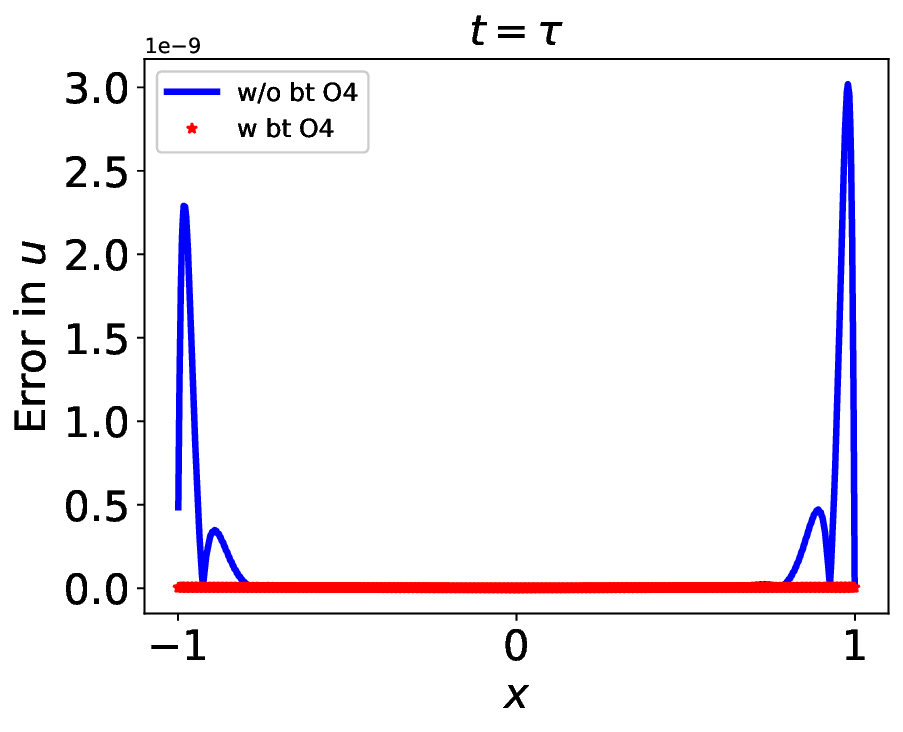}}
\subfigure{\includegraphics[scale=0.41]{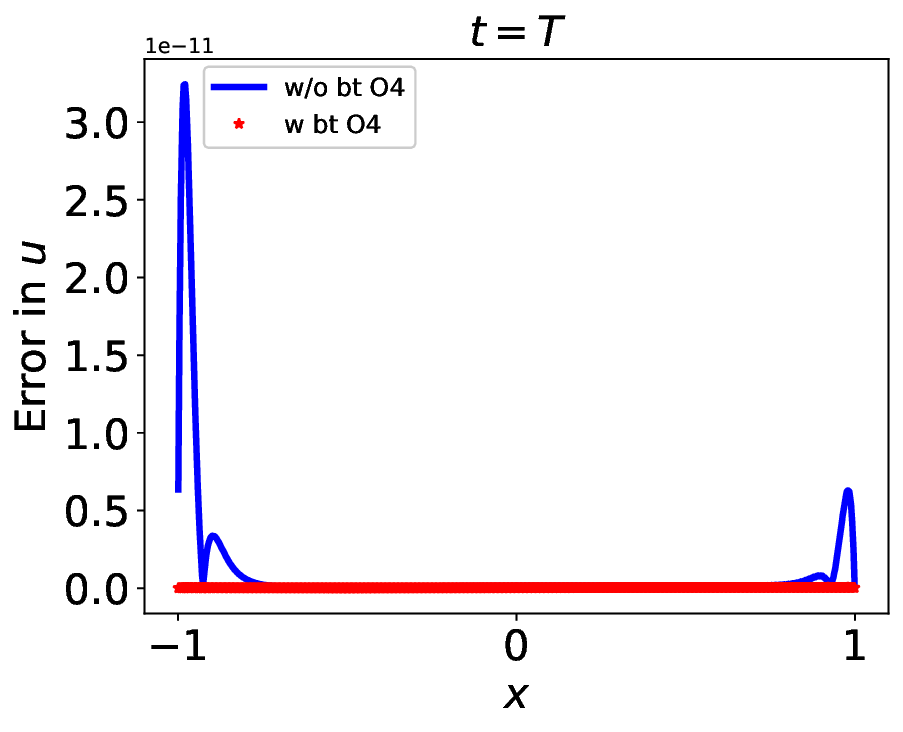}}
\caption{$|\hat{u}-u|$ for $N=160$ with the same data of Table \ref{tb-1d-linear-O4}.}
\label{fig:BL_O4}
\end{figure}

In Table \ref{tb-1d-linear-O4} we show the errors and orders of convergence for the fourth-order IMEX LDG scheme, without and with boundary treatment. Firstly, in the case without boundary treatment, the order reduction is severe (only second $L^\infty$ order). Secondly, when the boundary treatment is applied, fourth-order accuracy is successfully restored for the three considered error norms. Finally, Figure \ref{fig:BL_O4} shows that the boundary treatment strategy vanishes the annoying spatial errors in the regions near the boundaries.

\begin{table}[ht]
\caption{Errors and orders of accuracy for PDE $u_t + \partial_x f(u) = \partial_x g(u_x)$, where $f = -C u$, $g = D u_x$, considering $C=0.1$, $D=1$. $CFL=0.25$. Spatial domain $[-1,1]$, computing time $T=5$. Dirichlet boundary conditions given by the exact solution $u(x,t)=e^{-t} \sin(x+Ct)$.}
\begin{center}
\begin{tabular}{ccccccc}
    \hline
    $N$ & \multicolumn{6}{c}{Without boundary treatment}\\
        \cline{2-7}
        & $L^1$ error & $L^1$ order & $L^2$ error & $L^2$ order & $L^\infty$ error & $L^\infty$ order \\
    \hline
    $5$ & $4.30e-08$ & $-$ & $3.58e-08$ & $-$ & $5.57e-08$ & $-$ \\
    \hline
    $10$ & $4.00e-09$ & $3.42$ & $4.04e-09$ & $3.14$ & $1.08e-08$ & $2.36$ \\
    \hline    
    $20$ & $4.61e-10$ & $3.11$ & $6.91e-10$ & $2.54$ & $2.20e-09$ & $2.29$ \\
    \hline    
    $40$ & $6.89e-11$ & $2.74$ & $1.37e-10$ & $2.33$ & $5.36e-10$ & $2.03$ \\
    \hline    
    $80$ & $1.19e-11$ & $2.53$ & $2.81e-11$ & $2.28$ & $1.32e-10$ & $2.02$ \\
    \hline    
    $160$ & $2.10e-12$ & $2.50$ & $5.83e-12$ & $2.26$ & $3.24e-11$ & $2.02$ \\
    \hline
\end{tabular}

\begin{tabular}{ccccccc}
    \hline
    $N$ & \multicolumn{6}{c}{With boundary treatment}\\
        \cline{2-7}
        & $L^1$ error & $L^1$ order & $L^2$ error & $L^2$ order & $L^\infty$ error & $L^\infty$ order \\
    \hline
    $5$ & $3.61e-08$ & $-$ & $2.96e-08$ & $-$ & $3.96e-08$ & $-$ \\
    \hline
    $10$ & $2.48e-09$ & $3.86$ & $2.05e-09$ & $3.85$ & $2.62e-09$ & $3.92$ \\
    \hline    
    $20$ & $1.57e-10$ & $3.98$ & $1.31e-10$ & $3.97$ & $1.68e-10$ & $3.96$ \\
    \hline    
    $40$ & $9.87e-12$ & $3.99$ & $8.24e-12$ & $3.99$ & $1.08e-11$ & $3.96$ \\
    \hline    
    $80$ & $6.17e-13$ & $4.00$ & $5.17e-13$ & $3.99$ & $6.92e-13$ & $3.96$ \\
    \hline    
    $160$ & $3.90e-14$ & $3.98$ & $3.28e-14$ & $3.98$ & $4.48e-14$ & $3.95$ \\
    \hline 
\end{tabular}
\end{center}
\label{tb-1d-linear-O4} 
\end{table}

\subsection{Boundary treatment efficiency}

Up to here we have shown that boundary treatment is crucial in terms of order of convergence. Here, we aim to emphasize that the computational cost of the boundary strategy is negligible concerning the one of IMEX LDG solvers without boundary treatment. In Figure \ref{fig:efficiency}, accuracy-work diagrams are shown for the numerical experiments previously explained in Sections \ref{subsec:1dheat} and \ref{sec:expO4}. The codes were developed in C++ language and were executed on a recent 13th Gen Intel(R) Core(TM) i7-1365U processor. On the $x$-axis the computing time in seconds is shown. On the $y$-axis, both $L^2$ errors (left) and $L^\infty$ errors (right) are displayed. Both third and fourth-order IMEX LDG methods are shown. Each one of the markers over the lines corresponds to one refinement level in space ($N=5,10,20,40,80,160,320,640$). From the plots, it is clear that the additional cost of the boundary correction technique is negligible relative to a naive application of boundary conditions. In summary, the plots confirm that performing the boundary treatment strategy is highly beneficial.

\begin{figure}[!htb]
\centering
\subfigure{\includegraphics[scale=0.39]{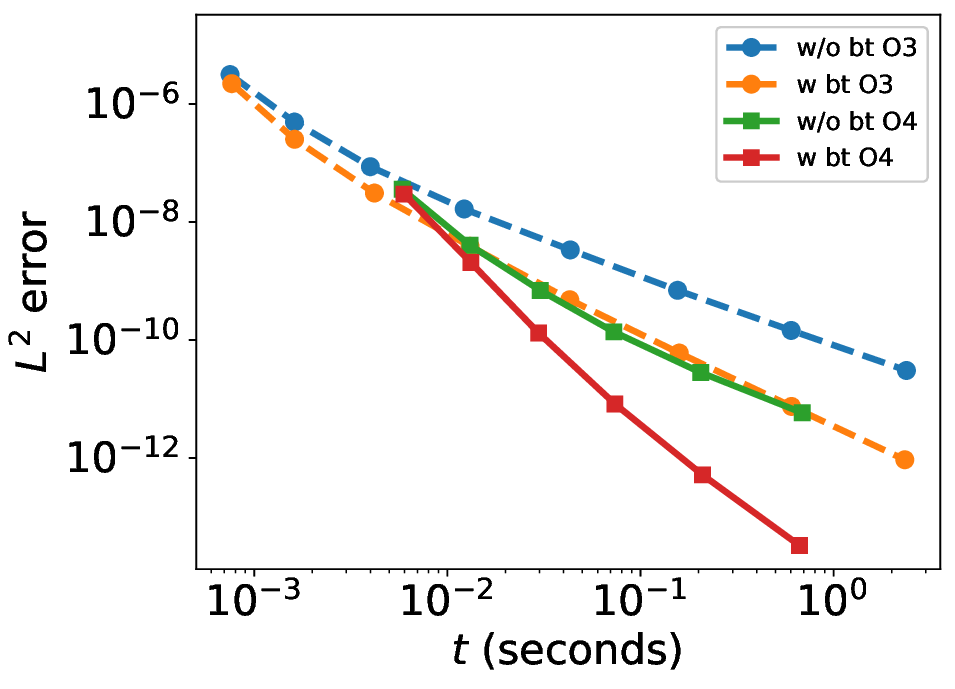}}
\subfigure{\includegraphics[scale=0.39]{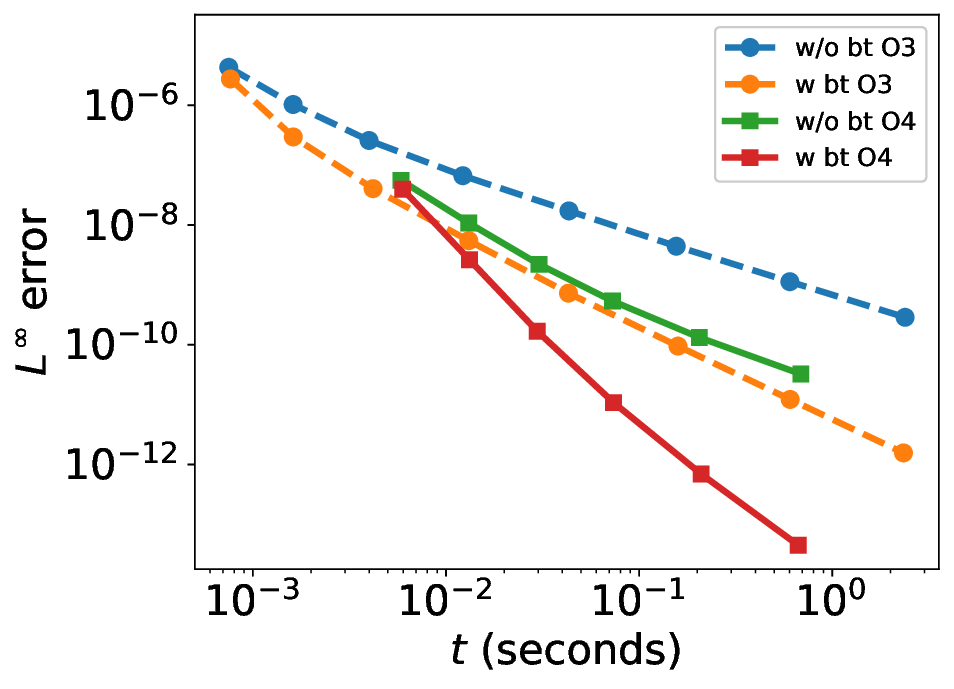}}
\caption{Efficiency plots with the  same data of Table \ref{tb-1d-linear-O4} in the $log_{10}\times log_{10}$ scale.}
\label{fig:efficiency}
\end{figure}

\section{Conclusions}
In this article, we propose novel algorithms to overcome the order reduction phenomenon when IMEX RK methods are applied to PDE problems with time-dependent Dirichlet boundary conditions. The algorithms treat boundary points in the same way as interior points. To this end, a Cauchy-Koval\'evskaya procedure and the differentiation of the IMEX internal stages are carried out. The proposed algorithms can be applied to general IMEX schemes and nonlinear PDE problems in dimensions greater than one. Besides, the strategies can be directly applied to problems with possibly stiff source terms. We show numerically that the designed algorithms can recover the desired order of accuracy for one-dimensional linear and nonlinear problems, as well as two-dimensional problems. Finally, we would like to point out that our boundary treatment algorithms can be directly applied to explicit schemes since explicit schemes are a special case of IMEX RK time integration methods.

\section*{Acknowledgments}
We would like to express our gratitude to anonymous reviewers for their valuable comments which helped us to improve the quality of the work.

\bibliographystyle{siamplain}
\bibliography{references}

\begin{thebibliography}{10}

\bibitem{Baeza2016a}
{\sc A.~Baeza, P.~Mulet, and D.~Zor\'io}, {\em {High Order Boundary
  Extrapolation Technique for Finite Difference Methods on Complex Domains with
  Cartesian Meshes}}, Journal of Scientific Computing, 66 (2016), pp.~761--791.

\bibitem{CALVO2001535}
{\sc M.~Calvo, J.~{de Frutos}, and J.~Novo}, {\em {Linearly implicit
  Runge–Kutta methods for advection–reaction–diffusion equations}},
  Applied Numerical Mathematics, 37 (2001), pp.~535--549.

\bibitem{Cockburn-Shu-90-IV}
{\sc B.~Cockburn, S.~Hou, and C.-W. Shu}, {\em {The Runge-Kutta Local
  Projection Discontinuous Galerkin Finite Element Method for Conservation
  Laws. IV: The Multidimensional Case}}, Mathematics of Computation, 54 (1990),
  pp.~545--581.

\bibitem{Cockburn-Karniadakis-Shu-2000}
{\sc B.~Cockburn, G.~E. Karniadakis, and C.-W. Shu}, {\em {The Development of
  Discontinuous Galerkin Methods}}, in Discontinuous Galerkin Methods,
  B.~Cockburn, G.~E. Karniadakis, and C.-W. Shu, eds., Berlin, Heidelberg,
  2000, Springer Berlin Heidelberg, pp.~3--50.

\bibitem{Cockburn-Shu-89-III}
{\sc B.~Cockburn, S.-Y. Lin, and C.-W. Shu}, {\em {TVB Runge-Kutta local
  projection discontinuous Galerkin finite element method for conservation laws
  III: One-dimensional systems}}, Journal of Computational Physics, 84 (1989),
  pp.~90--113.

\bibitem{Cockburn-Shu-89-II}
{\sc B.~Cockburn and C.-W. Shu}, {\em {TVB Runge-Kutta Local Projection
  Discontinuous Galerkin Finite Element Method for Conservation Laws II:
  General Framework}}, Mathematics of Computation, 52 (1989), pp.~411--435.

\bibitem{Cockburn-Shu-91-I}
{\sc B.~Cockburn and C.-W. Shu}, {\em {The Runge-Kutta local projection
  P1-discontinuous-Galerkin finite element method for scalar conservation
  laws}}, ESAIM: M2AN, 25 (1991), pp.~337--361.

\bibitem{Cockburn-Shu-SIAM-JNA-1998}
{\sc B.~Cockburn and C.-W. Shu}, {\em {The Local Discontinuous Galerkin Method
  for Time-Dependent Convection-Diffusion Systems}}, SIAM Journal on Numerical
  Analysis, 35 (1998), pp.~2440--2463.

\bibitem{Cockburn-Shu-JSC-2001}
{\sc B.~Cockburn and C.-W. Shu}, {\em {Runge–Kutta Discontinuous Galerkin
  Methods for Convection-Dominated Problems}}, Journal of Scientific Computing,
  16 (2001), pp.~173--261.

\bibitem{DAKIN2018228}
{\sc G.~Dakin, B.~Després, and S.~Jaouen}, {\em {Inverse Lax–Wendroff
  boundary treatment for compressible Lagrange-remap hydrodynamics on Cartesian
  grids}}, Journal of Computational Physics, 353 (2018), pp.~228--257.

\bibitem{doi:10.1137/0719047}
{\sc D.~Gottlieb, M.~Gunzburger, and E.~Turkel}, {\em {On Numerical Boundary
  Treatment of Hyperbolic Systems for Finite Difference and Finite Element
  Methods}}, SIAM Journal on Numerical Analysis, 19 (1982), pp.~671--682.

\bibitem{KENNEDY2003139}
{\sc C.~A. Kennedy and M.~H. Carpenter}, {\em {Additive Runge–Kutta schemes
  for convection–diffusion–reaction equations}}, Applied Numerical
  Mathematics, 44 (2003), pp.~139--181.

\bibitem{LU2016276}
{\sc J.~Lu, J.~Fang, S.~Tan, C.-W. Shu, and M.~Zhang}, {\em {Inverse
  Lax–Wendroff procedure for numerical boundary conditions of
  convection–diffusion equations}}, Journal of Computational Physics, 317
  (2016), pp.~276--300.

\bibitem{LUBICH1995241}
{\sc C.~Lubich and A.~Ostermann}, {\em {Interior estimates for time
  discretizations of parabolic equations}}, Applied Numerical Mathematics, 18
  (1995), pp.~241--251.

\bibitem{SEAL2017}
{\sc S.~Moe, J.~Rossmanith, and D.~Seal}, {\em {Positivity-Preserving
  Discontinuous Galerkin Methods with Lax–Wendroff Time Discretizations}},
  Journal of Scientific Computing, 71 (2017), pp.~44--70.

\bibitem{ostermannRoche92}
{\sc A.~Ostermann and M.~Roche}, {\em {Runge-Kutta methods for partial
  differential equations and fractional orders of convergence}}, Mathematics of
  Computation, 59 (1992), pp.~403--420.

\bibitem{Russo05}
{\sc L.~Pareschi and G.~Russo}, {\em {Implicit–Explicit Runge–Kutta Schemes
  and Applications to Hyperbolic Systems with Relaxation}}, Journal of
  Scientific Computing, 25 (2005), pp.~129--155.

\bibitem{sanzSerna86}
{\sc J.~M. Sanz-Serna, J.~G. Verwer, and W.~H. Hundsdorfer}, {\em {Convergence
  and order reduction of Runge-Kutta schemes applied to evolutionary problems
  in partial differential equations}}, Numerische Mathematik, 50 (1986),
  pp.~405--418.

\bibitem{TAN20122510}
{\sc C.~Tan, C.~Wang, C.-W. Shu, and J.~Ning}, {\em {Efficient implementation
  of high order inverse Lax–Wendroff boundary treatment for conservation
  laws}}, Journal of Computational Physics, 231 (2012), pp.~2510--2527.

\bibitem{TAN2010}
{\sc S.~Tan and C.-W. Shu}, {\em {Inverse Lax-Wendroff procedure for numerical
  boundary conditions of conservation laws}}, Journal of Computational Physics,
  229 (2010), pp.~8144--8166.

\bibitem{doi:10.1137/140956750}
{\sc H.~Wang, C.-W. Shu, and Q.~Zhang}, {\em {Stability and Error Estimates of
  Local Discontinuous Galerkin Methods with Implicit-Explicit Time-Marching for
  Advection-Diffusion Problems}}, SIAM Journal on Numerical Analysis, 53
  (2015), pp.~206--227.

\bibitem{Wang-Shu-Zhang-2016}
{\sc H.~Wang, C.-W. Shu, and Q.~Zhang}, {\em {Stability analysis and error
  estimates of local discontinuous Galerkin methods with implicit–explicit
  time-marching for nonlinear convection–diffusion problems}}, Applied
  Mathematics and Computation, 272 (2016), pp.~237--258.
\newblock Recent Advances in Numerical Methods for Hyperbolic Partial
  Differential Equations.

\bibitem{Haijin-wang-Shu-M2AN-2016}
{\sc H.~Wang, S.~Wang, S.~Zhang, and C.-W. Shu}, {\em {Local discontinuous
  Galerkin methods with implicit-explicit time-marching for multi-dimensional
  convection-diffusion problems}}, ESAIM: M2AN, 50 (2016), pp.~1083--1105.

\bibitem{Haijin-Shu-SIAM-JNA-2016}
{\sc H.~Wang, Q.~Zhang, and C.-W. Shu}, {\em {Stability analysis and error
  estimates of local discontinuous Galerkin methods with implicit–explicit
  time-marching for nonlinear convection–diffusion problems}}, Applied
  Mathematics and Computation, 272 (2016), pp.~237--258.

\bibitem{Haijin-Shu-2017}
{\sc H.~Wang, Q.~Zhang, and C.-W. Shu}, {\em {Stability analysis and error
  estimates of local discontinuous Galerkin methods with implicit-explicit
  time-marching for the time-dependent fourth order PDEs}}, ESAIM: M2AN, 51
  (2017), pp.~1931--1955.

\bibitem{WANG2018164}
{\sc H.~Wang, Q.~Zhang, and C.-W. Shu}, {\em {Third order implicit–explicit
  Runge–Kutta local discontinuous Galerkin methods with suitable boundary
  treatment for convection–diffusion problems with Dirichlet boundary
  conditions}}, Journal of Computational and Applied Mathematics, 342 (2018),
  pp.~164--179.

\bibitem{Zhang-Shu-Wang-2010}
{\sc Q.~Zhang and C.-W. Shu}, {\em {Stability Analysis and A Priori Error
  Estimates of the Third Order Explicit Runge–Kutta Discontinuous Galerkin
  Method for Scalar Conservation Laws}}, SIAM Journal on Numerical Analysis, 48
  (2010), pp.~1038--1063.

\bibitem{ZHAO2020IMEX}
{\sc W.~Zhao and J.~Huang}, {\em {Boundary treatment of implicit-explicit
  Runge-Kutta method for hyperbolic systems with source terms}}, Journal of
  Computational Physics, 423 (2020), p.~109828.

\bibitem{ZHAO2020}
{\sc W.~Zhao, J.~Huang, and S.~J. Ruuth}, {\em {Boundary treatment of high
  order Runge-Kutta methods for hyperbolic conservation laws}}, Journal of
  Computational Physics, 421 (2020), p.~109697.

\bibitem{ZHAO2022}
{\sc H.~Zuo, W.~Zhao, and P.~Lin}, {\em {Boundary treatment of linear multistep
  methods for hyperbolic conservation laws}}, Applied Mathematics and
  Computation, 425 (2022), p.~127079.

\end{thebibliography}

\clearpage

{\begin{center}
\textbf{Supplementary materials} for the article \textit{``Boundary treatment for high-order IMEX Runge-Kutta Local Discontinuous Galerkin schemes for multidimensional nonlinear parabolic PDEs''}
 \end{center}
}
\setcounter{section}{0}
 \setcounter{page}{1}
\renewcommand*{\thesection}{\Alph{section}}

\vspace{0.2cm}
The supplementary materials explicitly detail the boundary treatment procedure developed in Section \ref{sec:bt_gt}, for the PDEs numerically solved in Section \ref{sec:numericalExperiments}, considering the third-order IMEX scheme \eqref{eq:imex1}-\eqref{eq:imexq12_123}. Therefore, this section is intended to allow the reader to easily reproduce the numerical experiments of our article.

\section{One-dimensional linear PDE with source terms} \label{app:A} In this section, the boundary treatment strategy is applied to the following one-dimensional PDE presented in Section \ref{subsec:1dheat} of the article,
\begin{equation*}
    u_t + \partial_x f(u) = \partial_x g(u_x) + h(u),
\end{equation*}
where
\begin{align*}
    f(u) = -C u, \quad
    g (u_x) = D u_x, \quad
    h(u) = (D-1)u,
\end{align*}
$C,D\in\R$ are the convection and diffusion coefficients, respectively. Besides, a ti\-me-de\-pen\-dent boundary condition $u_{\lvert\Gamma} = \omega(t)$ is considered.
Therefore, 
\begin{align}
    u_t =& C u_x {+ (D-1)u} +D u_{xx} .    \label{eq:edp_1d_lineal}
\end{align}
On the one hand, for $D=1$, this PDE is the one presented in \cite{WANG2018164}. On the other hand, for $D\neq 1$, the PDE has a linear source term. Note that $u(x,t)=e^{-t}\sin(x+Ct)$ is the exact solution of the problem.

\subsection{First IMEX stage}
The first IMEX stage \eqref{eq:IMEX_1-general-1d} in this case, is
\begin{align}
    \frac{u^{n,1} - u^n}{\gamma \tau}=&  \left[C u_x^n {+(D-1)u^n}  \right]+ {\left[ D u^{n,1}_{xx} \right]}.   \label{eq:IMEX_1-linear-1d}
\end{align}
According \eqref{eq:general-explicit-part-1d}, the explicit part is given by
\begin{equation*}
    \xi^{n,i}(u,x,t) = Cu^{n,i}_x +(D-1)u^{n,i}.
\end{equation*}
Then, equation \eqref{eq:IMEX_1_1-general-1d} reads
\begin{align}
    \frac{u^{n,1} - u^n}{\gamma \tau}=&  C u_x^{n,0} +(D-1)u^{n,0} + u^{n,1}_t - \left( C u_x^{n,1} +(D-1)u^{n,1}\right).  \label{eq:IMEX_1_1-linear-1d}
\end{align}
Now, $u_x^{n,1}$ is computed with \eqref{eq:uxn1-general-1d}, substituting and regrouping, the equation reduces to
\begin{align*}
     u^{n,1} - u^n = \gamma \tau {\big[}u_t^{n,1} {-(D-1)(u^{n,1}-u^n)} {\big]} - \gamma^2 \tau^2 C \l {(D-1)u_x^n+}C u_{xx}^n  +  D u^{n,1}_{xxx} \r. 
\end{align*}
At this point, we apply Proposition \ref{approximation_tnl}, up to the first term to approximate $u_{xxx}^{n,1}$. We end up with
\begin{align*}
u^{n,1} - u^n =& \gamma \tau {\big[}u_t^{n,1} {-(D-1)(u^{n,1}-u^n)} {\big]} - \gamma^2 \tau^2 C \l {(D-1)u_x^n+} C u_{xx}^n  +  D u^{n}_{xxx} \r.
\end{align*}
Finally, we need to solve the linear equation, we propose:
\begin{enumerate}
\item If we solve the equation analytically, which in this case is easy to solve for $u^{n,1},$  the value at the first stage can finally be written as
\begin{align*}
u^{n,1} =& u^n + {\frac{1}{1+\gamma\tau(D-1)}} {\Big[} \gamma \tau u_t^{n,1} - \gamma^2 \tau^2 C \l {(D-1)u_x^n+} C u_{xx}^n  +  D u^{n}_{xxx} \r {\Big]}. 
\end{align*}

\item If we approximate the term $u^{n,1}$ of the right-hand side to time $t^n$ considering \eqref{eq:approx_nl_a_n0} with the first two terms of the expansion, then we get:
\begin{align*}
u^{n,1} =& 
 u^n + \gamma \tau u_t^{n,1} + \gamma \tau (D-1)u^n - \gamma \tau (D-1) \l{u^{n} + (t^n + \gamma \tau - t^n)u_t^n + O((\gamma \tau)^2)} \r \\ 
&- \gamma^2 \tau^2 C \l (D-1)u_x^n+ C u_{xx}^n  +  D u^{n}_{xxx} \r  \\
=&u^n + \gamma \tau u_t^{n,1} - \gamma^2 \tau^2 (D-1) {u_t^n } \\
&- \gamma^2 \tau^2 C \l (D-1)u_x^n+ C u_{xx}^n  +  D u^{n}_{xxx} \r + \mathcal{O}(\tau^3).
\end{align*}

\end{enumerate}

\subsection{Second IMEX stage}
The second IMEX stage \eqref{IMEX-stage2-general-1d} is now given in this case by:
\begin{align*}
    \frac{u^{n,2}-u^n}{\tau} =& \l \frac{1 + \gamma}{2} - \alpha_1\r \left[ C u_x^n {+(D-1)u^n}\right] + \alpha_1 \left[ C u_x^{n,1} {+(D-1)u^{n,1}}  \right] \\
    & + \frac{1 - \gamma}{2}\left[ D u^{n,1}_{xx}  \right] + \gamma \left[ D u^{n,2}_{xx} \right].
\end{align*}
Thus, applying the general equation \eqref{eq:IMEX_2_2-general-1d}, we obtain:
\begin{align}
     \frac{u^{n,2}-u^n}{\tau} =&  {\alpha_1} u_t^{n,1} + \gamma u_t^{n,2} + \l \frac{1 - \gamma}{2} -\alpha_1 \r \frac{u^{n,1} - u^n}{\gamma \tau}   \nonumber \\
      & + \gamma \left[ C u_x^n {+(D-1)u^n} \right] + \gamma \left[ -C u_x^{n,2} {-(D-1)u^{n,2}} \right]. \label{eq:IMEX_2_2-linear-1d}
\end{align}
Therefore, now we are going to calculate the derivative of $u^{n,2}$ using the IMEX scheme. Note that from these third-order approximations of the derivatives in $\tau$ and applying Section \ref{subsec:IMEX_gen_stage2}, we know that
\begin{equation*}
    \frac{u_{x}^{n,2} - u_{x}^n}{\tau} = \l \frac{1 + \gamma}{2}\r \frac{u_{x}^{n,1} - u_{x}^n}{\gamma\tau},
\end{equation*}
being $\frac{u_{x}^{n,1} - u_{x}^n}{\gamma\tau}$ the replacement we did in the first stage. Therefore:
\begin{align*}
    \frac{u^{n,2}-u^n}{\tau} =&  {\alpha_1} u_t^{n,1} + \gamma u_t^{n,2} {+\gamma (-(D-1)) (u^{n,2}-u^n)} + \l \frac{1 - \gamma}{2} -\alpha_1 \r \frac{u^{n,1} - u^n}{\gamma \tau}  \\
    & -C  \l  \frac{1 + \gamma}{2}\r \l u_{x}^{n,1} - u_{x}^n \r,
\end{align*}
where $u_x^{n,1}- u_x^{n}$ is calculated with \eqref{eq:uxn1-general-1d}. Then, the linear equation for this stage reads
\begin{align*}
    u^{n,2}-u^n =& \alpha_1 \tau u_t^{n,1} + \gamma \tau u_t^{n,2}  + \l \frac{1 - \gamma}{2} -\alpha_1 \r \frac{u^{n,1} - u^n}{\gamma}  -C  \l  \frac{1 + \gamma}{2}\r \tau \l u_{x}^{n,1} - u_{x}^n \r \\
    & +\gamma \tau (-(D-1)) ({u^{n,2}}-u^n). 
\end{align*}
We propose the following ways to solve it:
\begin{enumerate}
\item Solving it analytically we get the following expression for $ u^{n,2} $:
\begin{align*}
    u^{n,2} =& u^n + \frac{\tau}{1+\tau\gamma(D-1)}\Bigg[ {\alpha_1} u_t^{n,1} + \gamma u_t^{n,2} + \l \frac{1 - \gamma}{2} -\alpha_1 \r \frac{u^{n,1} - u^n}{\gamma \tau}  \\
    & \hspace{3.5cm} -C  \l  \frac{1 + \gamma}{2}\r \l u_{x}^{n,1} - u_{x}^n \r \Bigg].
\end{align*}

\item Passing the term $u^{n,2}$ of the right hand side to time $t^n$ by using \eqref{eq:approx_nl_a_n0}, we obtain:
\begin{align*}
    u^{n,2} =&  
    u^n + \alpha_1 \tau u_t^{n,1} + \gamma \tau u_t^{n,2}  + \l \frac{1 - \gamma}{2} -\alpha_1 \r \frac{u^{n,1} - u^n}{\gamma}   -C  \l  \frac{1 + \gamma}{2}\r \tau \l u_{x}^{n,1} - u_{x}^n \r \\
    & \hspace{1cm} +\gamma \tau (-(D-1)) { \l {u^{n}} +  \frac{1+\gamma}{2} \tau u_t^n \r} + \gamma \tau (D-1) {u^n} \\
    =& u^n + \alpha_1 \tau u_t^{n,1} + \gamma \tau u_t^{n,2}  + \l \frac{1 - \gamma}{2} -\alpha_1 \r \frac{u^{n,1} - u^n}{\gamma}  -C  \l  \frac{1 + \gamma}{2}\r \tau \l u_{x}^{n,1} - u_{x}^n \r \\
    & \hspace{1cm} -\gamma \tau^2 (D-1) {   \frac{1+\gamma}{2}  u_t^n }.       
\end{align*}
\end{enumerate}

\subsection{Third IMEX stage}
The third IMEX stage \eqref{eq:imexStep3-general-1d} reads
\begin{align*}
    \frac{u^{n,3} - u^n}{\tau} =& \l 1 -\alpha_2 \r \left[ C u_x^{n,1} {+(D-1)u^{n,1}}\right] + \alpha_2 \left[ C u_x^{n,2} {+(D-1)u^{n,2}}\right] \\
    &+ \beta_1 \left[ D u^{n,1}_{xx} \right] + \beta_2 \left[ D u^{n,2}_{xx} \right] +\gamma \left[ D u^{n,3}_{xx} \right].
\end{align*}
Following Section \ref{subsec:IMEX_gen_stage3} this results in
\begin{align*}
    \frac{u^{n,3} - u^n}{\tau} = & \l {1-\alpha_2} \r u_t^{n,1} + \beta_2u_t^{n,2} + \gamma u_t^{n,3} +\l \alpha_2 + \beta_1 -1 \r \frac{u^{n,1} - u^n}{\gamma \tau}  \\ 
    &+ \l \beta_2 - \alpha_2 \r \left[  \frac{u^{n,2}-u^n}{\gamma \tau} - \frac{{\alpha_1}}{\gamma } u_t^{n,1} -  u_t^{n,2} - \l \frac{1 - \gamma}{2} -\alpha_1 \r \frac{u^{n,1} - u^n}{{\gamma^2} \tau} \right] \\
    &+\gamma \left[ C u_x^n {+(D-1)u^{n}} \right] +\gamma \left[  -C u_x^{n,3} {-(D-1)u^{n,3}} \right]. 
\end{align*}
It remains to evaluate the directional derivatives of $u^{n,3}$, we use \eqref{eq:approx_x_n3_x_n}, so that:
\begin{align*}
    \frac{u^{n,3} - u^n}{\tau} =&  \l {1-\alpha_2}  \r u_t^{n,1} + \beta_2u_t^{n,2} + \gamma u_t^{n,3} +\l \alpha_2 + \beta_1 -1 \r \frac{u^{n,1} - u^n}{\gamma \tau}  \\ 
    &+ \l \beta_2 - \alpha_2 \r \left[  \frac{u^{n,2}-u^n}{\gamma \tau} - \frac{{\alpha_1}}{\gamma } u_t^{n,1} -  u_t^{n,2} - \l \frac{1 - \gamma}{2} -\alpha_1 \r \frac{u^{n,1} - u^n}{ {\gamma^2}\tau} \right] \\
    &+\gamma \left[ C { u_x^n} {+(D-1)u^{n}} \right] \\
    &+\gamma \left[  -C \bigg( {u_x^{n}} + \frac{u_x^{n,1}-u_x^{n}}{\gamma} \bigg) {-(D-1)u^{n,3}} \right] \nonumber \\
    =&  \l {1-\alpha_2} - \frac{(\beta_2-\alpha_2)\alpha_1}{\gamma}  \r u_t^{n,1} + \alpha_2u_t^{n,2} + \gamma u_t^{n,3} {+ \gamma(-(D-1))(u^{n,3}-u^n)} \\ 
    & + \l \alpha_2 + \beta_1 -1 - \frac{\beta_2-\alpha_2}{\gamma} \l \frac{1-\gamma}{2}-\alpha_1\r \r \frac{u^{n,1} - u^n}{\gamma \tau} \nonumber \\
    &+ \frac{ \beta_2 - \alpha_2 }{\gamma}  \left[  \frac{u^{n,2}-u^n}{ \tau}  \right]   -C  \l u_x^{n,1}-u_x^{n} \r.  \nonumber
\end{align*}
Again, we need to solve the linear equation. As before, here we propose two ways:
\begin{enumerate}
\item The algebraic solution of the linear equation:
\begin{align*}
    u^{n,3} =& u^n + \frac{\tau}{1+\gamma\tau(D-1)} \Bigg\lbrace  \l 1-\alpha_2 - \frac{(\beta_2-\alpha_2)\alpha_1}{\gamma}  \r u_t^{n,1} + \alpha_2u_t^{n,2} + \gamma u_t^{n,3} \\ 
    &\hspace{3.3cm} + \left[ \alpha_2 + \beta_1 -1 - \frac{\beta_2-\alpha_2}{\gamma} \l \frac{1-\gamma}{2}-\alpha_1\r \right] \frac{u^{n,1} - u^n}{\gamma \tau} \nonumber \\
    &\hspace{3.3cm} + \frac{ \beta_2 - \alpha_2 }{\gamma}  \left[  \frac{u^{n,2}-u^n}{ \tau}  \right] -C  \l u_x^{n,1}-u_x^{n} \r  \Bigg\rbrace.
\end{align*}

\item Taking $u^{n,3}$ on the right hand side to time $t^n$ leads to
\begin{align*}
    u^{n,3} - u^n =&  \l 1-\alpha_2 - \frac{(\beta_2-\alpha_2)\alpha_1}{\gamma}  \r \tau u_t^{n,1} + \alpha_2 \tau u_t^{n,2} + \gamma \tau u_t^{n,3}  \\ 
    & + \l \alpha_2 + \beta_1 -1 - \frac{\beta_2-\alpha_2}{\gamma} \l \frac{1-\gamma}{2}-\alpha_1\r \r \frac{u^{n,1} - u^n}{\gamma } \nonumber \\
    &+ \frac{ \beta_2 - \alpha_2 }{\gamma}  \left[  u^{n,2}-u^n  \right]  -C \tau  \l u_x^{n,1}-u_x^{n} \r  - \gamma \tau^2 (D-1){u_t^n}.   \nonumber      
\end{align*}

\end{enumerate}

\section{One-dimensional nonlinear PDE with source terms}\label{app:B}

In this section, the proposed boundary treatment strategy is applied to the following diffusive Burgers equation with source terms solved in Section \ref{subsec:1dburgers} of the article,
\begin{equation*}
    u_t + \partial_x f(u) = \partial_x g(u_x) + h(u,x,t),
\end{equation*}
where
\begin{equation*}
    f(u) = \frac{u^2}{2}, \quad
    g(u_x) = D u_x, \quad
    h(u,x,t) = p(x,t)u,
\end{equation*}
being $D\in\R$ the diffusion parameter. Therefore, 
\begin{align}
    u_t =& - u u_x +D u_{xx} + p(x,t)u.    \label{eq:burgers-with-source}
\end{align}

\subsection{First IMEX stage}
The first IMEX stage \eqref{eq:IMEX_1-general-1d} reads
\begin{align*}
    \frac{u^{n,1} - u^n}{\gamma \tau}=&  {\left[-u^n u_x^n +p(x,t)u^n \right]}+ {\left[ D u^{n,1}_{xx} \right]}.   
\end{align*}
According to  \eqref{eq:general-explicit-part-1d}, the explicit part operator is
\begin{equation*}
        \xi^{n,i}(x) = -u^{n,i} u_x^{n,i} +p(x,t^{n,i})u^{n,i}. 
\end{equation*}
Substituting the derivative \eqref{eq:uxn1-general-1d} on \eqref{eq:IMEX_1_1-general-1d} for this particular case, and replacing $u_{xxx}^{n,1}$ with $u_{xxx}^{n,0}$ we end up with

\begin{align*}
 u^{n,1} =& u^n + \gamma\tau \l u^{n,1}_t + p(x,t)u^n  - u^n u_x^n\r -\gamma\tau p(x,t^{n,1}){u^{n,1}} \\
 &+ \gamma\tau {u^{n,1}} \Bigg[ u_x^{n} +\gamma\tau\Big( - (u^n_x)^2 - u^nu^n_{xx}+ p'(x,t^n)u^n + p(x,t^n)u_x^n  +  D u^{{n}}_{xxx} \Big)  \Bigg].
 \end{align*}
In order to deal with this nonlinear equation in $u^{n,1}$ we propose:
\begin{enumerate}
\item Obtaining the exact solution:
$$ \hspace{-0.2cm}
u^{n,1} = \frac{u^n +\gamma \tau \l -u^n u_x^n +p(x,t^n)u^n +  u^{n,1}_t\r}{1 - \gamma^2\tau^2 \l -\frac{p(x,t^{n,1})}{\gamma \tau} + \frac{ u_x^{n}}{\gamma\tau} - (u^n_x)^2 - u^nu^n_{xx} + p'(x,t^n)u^n + p(x,t^n)u^n_x  +  D u^{n}_{xxx} \r } .
$$

\item Solving by transforming the terms $u^{n,1}$ in the right side to terms in $u^{n,0}$ according to the approximation obtained in \eqref{eq:approx_nl_a_n0}:
\begin{align*}
 u^{n,1} =& 
 u^n + \gamma\tau \l u^{n,1}_t + p(x,t)u^n  - u^n u_x^n\r \\
 &+ \gamma\tau {[u^{n}+\gamma \tau u_t^n]} \Bigg[-p(x,t^{n,1}) +u_x^{n} \\
 & \hspace{1cm}+\gamma\tau\Big( - (u^n_x)^2 - u^nu^n_{xx}+ p'(x,t^n)u^n + p(x,t^n)u_x^n  +  D u^{{n}}_{xxx} \Big)  \Bigg]. 
\end{align*}

\end{enumerate}

\subsection{Second IMEX stage}
In this case, the second IMEX stage \eqref{IMEX-stage2-general-1d} reads:
\begin{align*}
    \frac{u^{n,2}-u^n}{\tau} =& \l \frac{1 + \gamma}{2} - \alpha_1\r \left[ -u^n u_x^n +p(x,t^n)u^n \right] + \alpha_1 \left[ -u^{n,1} u_x^{n,1} + p(x,t^{n,1})u^{n,1}  \right] \\
    & + \frac{1 - \gamma}{2}\left[ D u^{n,1}_{xx}  \right] + \gamma \left[ D u^{n,2}_{xx} \right].
\end{align*}
As previously, we substitute \eqref{eq:IMEX_2_2-u_x} on \eqref{eq:IMEX_2_2-general-1d} for this particular case, leading to
\begin{align*}
     u^{n,2} =& u^n + \alpha_1\tau u_t^{n,1} + \gamma \tau u_t^{n,2} + \l \frac{1 - \gamma}{2} -\alpha_1 \r \frac{u^{n,1} - u^n}{\gamma}  + \gamma \tau \left[ -u^n u_x^n +p(x,t^n)u^n\right] \nonumber \\
    &+ \gamma \tau  u^{n,2} \l u_x^n + \l\frac{1 + \gamma}{2}\r \dfrac{u_x^{n,1}-u_x^n}{\gamma} -p(x,t^{n,2})\r.
\end{align*}

Now it is needed to solve the nonlinear equation, we proceed in two ways:
\begin{enumerate}

\item Obtaining the analytical solution for the denominator being not null:
$$
    \hspace{-0.5cm}u^{n,2} =  \frac{u^n +  \tau \l {\alpha_1} u_t^{n,1} + \gamma u_t^{n,2} + \l \frac{1 - \gamma}{2} -\alpha_1 \r \frac{u^{n,1} - u^n}{\gamma \tau}     + \gamma \left[ \xi^n \right] \r}{1 + \gamma \tau p(x,t^{n,2}) - \gamma^2\tau^2 \l u_x^n/\gamma\tau + \l   \frac{1 + \gamma}{2} - \alpha_1\r \left[ \xi_x^{n} \right]  + \alpha_1 \left[\xi_x^{n,1} \right] + \frac{1 + \gamma}{2}\left[  D u^{n}_{xxx}  \right]\r}.
$$

\item If we approximate $u^{n,2}$, in terms of $u^n$ as described in \eqref{eq:approx_nl_a_n0}:
\begin{align*}
     {u^{n,2}} =& u^n + {\alpha_1} \tau u_t^{n,1} + \gamma \tau u_t^{n,2} + \l \frac{1 - \gamma}{2} -\alpha_1 \r \frac{u^{n,1} - u^n}{\gamma}  + \gamma \tau \left[ -u^n u_x^n +p(x,t^n)u^n\right] \nonumber \\
    &+ \gamma \tau  {\left[u^{n} + \dfrac{1+\gamma}{2}\tau u_t^n + \mathcal{O}(\tau^2)\right]} \l u_x^n + \l\frac{1 + \gamma}{2}\r \dfrac{u_x^{n,1}-u_x^n}{\gamma} -p(x,t^{n,2})\r.
\end{align*}

\end{enumerate}

\subsection{Third IMEX stage}
Following the procedure described in Section \ref{subsec:IMEX_gen_stage3}, equation \eqref{eq:IMEX_3_general} with the substitution \eqref{eq:approx_x_n3_x_n} is in this case

\begin{align*}
    u^{n,3} =& u^n + \l 1-\alpha_2 - \frac{(\beta_2-\alpha_2)\alpha_1}{\gamma}  \r \tau u_t^{n,1} + \alpha_2 \tau u_t^{n,2} + \gamma \tau u_t^{n,3}  \\ 
    & + \l \alpha_2 + \beta_1 -1 - \frac{\beta_2-\alpha_2}{\gamma} \l \frac{1-\gamma}{2}-\alpha_1\r \r \frac{u^{n,1} - u^n}{\gamma } + \frac{ \beta_2 - \alpha_2 }{\gamma}  \left[ u^{n,2}-u^n  \right]\nonumber \\
    & - \gamma \tau [u^n u_x^n -p(x,t^n)u^n]+ \gamma \tau u^{n,3} \bigg( u_x^{n} + \frac{u_x^{n,1}-u_x^{n}}{\gamma} - p(x,t^{n,3})\bigg).
\end{align*}

This nonlinear equation has to be solved. Again, the two ways we propose are:
\begin{enumerate}
\item Solve the nonlinear equation for $u^{n,3}$:
\begin{equation*}
    u^{n,3} =   \frac{ u^{n} + \tau \cdot Num}{Den},
\end{equation*}
where
\begin{align*}
    Num = &\l {1-\alpha_2} \r u_t^{n,1} + \beta_2u_t^{n,2} + \gamma u_t^{n,3} +\l \alpha_2 + \beta_1 -1 \r \frac{u^{n,1} - u^n}{\gamma \tau} \\
    &+ \l \beta_2 - \alpha_2 \r \left[  \frac{u^{n,2}-u^n}{\gamma \tau} - \frac{{\alpha_1}}{\gamma } u_t^{n,1} -  u_t^{n,2} - \l \frac{1 - \gamma}{2} -\alpha_1 \r \frac{u^{n,1} - u^n}{{\gamma^2} \tau} \right] \\
    & +\gamma \left[ \xi^n \right], \\
    Den =&  1 + \gamma \tau p(x,t^{n,3}) - \gamma \tau^2  \left[u_x^n/\tau +\l 1 -\alpha_2 \r \xi_x^{n,1} + \alpha_2 \xi_x^{n,2}  +   D u^{n}_{xxx}  \right],
\end{align*}
for $Den\neq 0$.
\item Approximate $u^{n,3}$ of the right hand side in terms of $u^{n,0}$ ending up with
\begin{align*}
    {u^{n,3}} =& u^n + \l 1-\alpha_2 - \frac{(\beta_2-\alpha_2)\alpha_1}{\gamma}  \r \tau u_t^{n,1} + \alpha_2 \tau u_t^{n,2} + \gamma \tau u_t^{n,3}  \\ 
    & + \l \alpha_2 + \beta_1 -1 - \frac{\beta_2-\alpha_2}{\gamma} \l \frac{1-\gamma}{2}-\alpha_1\r \r \frac{u^{n,1} - u^n}{\gamma } + \frac{ \beta_2 - \alpha_2 }{\gamma}  \left[ u^{n,2}-u^n  \right] \\
    & - \gamma \tau [u^n u_x^n -p(x,t^n)u^n]+ \gamma \tau {\left[ u^{n} +\tau u_t^n\right]} \bigg( u_x^{n} + \frac{u_x^{n,1}-u_x^{n}}{\gamma} - p(x,t^{n,3})\bigg).  \nonumber    
\end{align*}
\end{enumerate}
As before, the second approach is numerically preferable, since there is no need to be alert with zero denominators in algebraic solutions.

\section{Two-dimensional convective heat equation with source term} \label{app:C}

Finally, we show the detailed boundary treatment strategy for the following bidimensional heat equation solved in Section \ref{sec:exp2d} of the article,
\begin{equation}\label{eq:2d_linear}
    u_t + \partial_x f_1(u) + \partial_y f_2(u) = \partial_x g_1(u_x, u_y) + \partial_y g_2(u_x, u_y) + h(u),
\end{equation}
where $$f_1 = -Cu, \quad f_2 = -Cu, \quad g_1 = D u_x, \quad g_2 = Du_y, \quad h = (2D-1)u,$$
$C,D\in\R$. Therefore,
\begin{equation} \label{eq:pdeHeat2d}
    u_t = C u_x + C u_y  + (2D-1)u + D u_{xx} + D u_{yy}.
\end{equation}
Note that $u(x,y,t)=e^{-t}\sin(x+Ct)\cos(y+Ct)$ is the exact solution of this problem.

Initially, the first, second, and third IMEX stages are fully detailed in subsections \ref{sm:2d_1st}, \ref{sm:2d_2nd} and \ref{sm:2d_3rd}, respectively.  Later, having in mind that third-order mixed derivatives in space need to be computed in the resulting strategies of the previous subsections, a proper way to approximate them is presented in subsection \ref{sm:2d_4th}. Finally, some plots of the numerical solutions are presented in subsection \ref{sm:2d_5th}.

\subsection{First IMEX stage} \label{sm:2d_1st}
The first IMEX stage reads
\begin{align}
    \frac{u^{n,1} - u^n}{\gamma \tau}=&  \left[C u_x^n + C u_y^n +  \l 2D - 1 \r  u^n \right]+ {\left[ D u^{n,1}_{xx} + D u^{n,1}_{yy}  \right]}.   \label{eq:IMEX_1}
\end{align}
We assume that the PDE \eqref{eq:pdeHeat2d} is fulfilled at time $t^{n,1}$, therefore:
\begin{align*}
     D u^{n,1}_{xx} + D u^{n,1}_{yy} & = u^{n,1}_t - C u^{n,1}_x - C u^{n,1}_y  - (2D-1)u^{n,1}.
\end{align*}
Substituting the diffusive term on the first stage \eqref{eq:IMEX_1}:
\begin{align}
    \frac{u^{n,1} - u^n}{\gamma \tau}=& \left[Cu^n_x + C u^n_y + (2D-1)u^n \right] + u^{n,1}_t - Cu^{n,1}_x - Cu^{n,1}_y - (2D-1)u^{n,1}. \label{eq:IMEX_1_1}
\end{align}
Now we compute the directional derivatives of $u^{n,1}$ by taking derivatives on equation \eqref{eq:IMEX_1} respect to $x$ and $y$
\begin{align}
   \frac{ u_x^{n,1}}{\gamma\tau} =&  \frac{ u_x^{n}}{\gamma\tau} + \left[C u^n_{xx} + C u^n_{yx} + (2D-1)u_x^n \right] +  D u^{n,1}_{xxx} + D u^{n,1}_{yyx},  \label{eq:uxn1}\\
   \frac{ u_y^{n,1}}{\gamma\tau} =&  \frac{ u_y^{n}}{\gamma\tau} + \left[C u^n_{xy} + C u^n_{yy} + (2D-1)u_y^n \right] + D u_{xxy}^{n,1} +D u^{n,1}_{yyy}. \label{eq:uyn1}
\end{align}
Substituting these derivatives on \eqref{eq:IMEX_1_1} we end up with
\begin{align*}
     u^{n,1} - u^n  =& 
   \gamma\tau \left[u^{n,1}_t - \l 2D-1\r \l  u^{n,1} -u^n \r \right] \\
   & - \gamma^2 \tau^2 C   \bigg\lbrace  C u^n_{xx} + C u^n_{yx} + (2D-1)u_x^n  + D u^{n,1}_{xxx} +D u^{n,1}_{yyx} \bigg\rbrace \\
   &  -  \gamma^2 \tau^2 C \bigg\lbrace  C u^n_{xy}  +  C u^n_{yy}  + (2D-1)u_y^n  + D u^{n,1}_{xxy} +D  u^{n,1}_{yyy} \bigg\rbrace.
\end{align*}
Now we apply Propositition \ref{approximation_tnl} to compute the partial derivatives $\partial_{x,y}^{(k)} u^{n,l} = \partial_{x,y}^{(k)} u^{n} + \mathcal{O}(\tau)$. As a result
\begin{align*}
u^{n,1} - u^n    &= \gamma\tau \left[u^{n,1}_t - \l 2D-1\r \l  u^{n,1} -u^n \r \right]  \\
     &\quad- \gamma^2 \tau^2 C   \bigg\lbrace  C u^n_{xx} + C u^n_{yx} + (2D-1)u_x^n  + D u^{n}_{xxx} +D u^{n}_{yyx} \bigg\rbrace \\
   & \quad -  \gamma^2 \tau^2 C \bigg\lbrace  C u^n_{xy}  +  C u^n_{yy}  + (2D-1)u_y^n  + D u^{n}_{xxy} +D  u^{n}_{yyy} \bigg\rbrace.
\end{align*}
Finally, to solve this equation, once again we propose:
\begin{enumerate}
\item {Analytical solution for $u^{n,1}$, the final formula is}
\begin{align*}
u^{n,1} =& u^n + \dfrac{1}{1+\gamma\tau (2D-1)} 
   \Bigg[\gamma\tau u^{n,1}_t  \\
   & \hspace{1cm} - \gamma^2 \tau^2 C  \bigg\lbrace C u^n_{xx} + C u^n_{yx} + (2D-1)u_x^n  + D u^{n}_{xxx} +D u^{n}_{yyx} \bigg\rbrace \nonumber \\
   &\hspace{1cm} - \gamma^2 \tau^2 C \bigg\lbrace C u^n_{xy}  +  C u^n_{yy}  + (2D-1)u_y^n  + D u^{n}_{xxy} +D  u^{n}_{yyy}   \bigg\rbrace \Bigg]. 
\end{align*}
 
 \item Application of Proposition \ref{approximation_tnl} to approximate $u^{n,1}$ with $u^{n,0}$, then
\begin{align*}
u^{n,1} &= u^n +\gamma\tau u^{n,1}_t - \gamma^2\tau^2 \l 2D-1\r   { u_t^n }  \\
     &\quad- \gamma^2 \tau^2 C   \bigg\lbrace  C u^n_{xx} + C u^n_{yx} + (2D-1)u_x^n  + D u^{n}_{xxx} +D u^{n}_{yyx} \bigg\rbrace \\
   & \quad -  \gamma^2 \tau^2 C \bigg\lbrace  C u^n_{xy}  +  C u^n_{yy}  + (2D-1)u_y^n  + D u^{n}_{xxy} +D  u^{n}_{yyy} \bigg\rbrace.      
\end{align*}

\end{enumerate}

\subsection{Second IMEX stage} \label{sm:2d_2nd}
The second IMEX stage for the two-dimensional case reads
\begin{align*}
    \frac{u^{n,2}-u^n}{\tau} =& \l \frac{1 + \gamma}{2} - \alpha_1\r \left[ C u_x^n + C u_y^n + (2D-1)u^n \right]  + \alpha_1 \left[ C u_x^{n,1} + C u_y^{n,1} + (2D-1)u^{n,1}  \right] \\
    &+ \frac{1 - \gamma}{2}\left[ D u^{n,1}_{xx} + D u^{n,1}_{yy} \right] + \gamma \left[ D u^{n,2}_{xx} + D u^{n,2}_{yy}  \right].
\end{align*}
Following Section \ref{subsec:IMEX_gen_stage2}, $u^{n,2}$ can be expressed as
\begin{align*}
    \frac{u^{n,2}-u^n}{\tau} 
    =&  {\alpha_1} u_t^{n,1} + \gamma u_t^{n,2} + \gamma \l -(2D-1)\r \l u^{n,2} - u^n\r + \l \frac{1 - \gamma}{2} -\alpha_1 \r \frac{u^{n,1} - u^n}{\gamma \tau} \nonumber \\
    &  -C  \l  \frac{1 + \gamma}{2}\r ({u_{x}^{n,1} - u_{x}^n})   -C \l  \frac{1 + \gamma}{2}\r ({u_{y}^{n,1} - u_{y}^n}).
\end{align*}
Finally, to deal with this linear equation we propose:
\begin{enumerate}
\item Analytical solution:
\begin{align*}
    u^{n,2}
    = u^n + \frac{\tau}{1+\gamma \tau \l 2D-1\r} \Bigg[& {\alpha_1} u_t^{n,1} + \gamma u_t^{n,2}  + \l \frac{1 - \gamma}{2} -\alpha_1 \r \frac{u^{n,1} - u^n}{\gamma \tau} \nonumber \\
    &  \hspace{-0.11cm}-C  \l  \frac{1 + \gamma}{2}\r ({u_{x}^{n,1} - u_{x}^n})  -C \l  \frac{1 + \gamma}{2}\r ({u_{y}^{n,1} - u_{y}^n} )\Bigg]. \nonumber
\end{align*}

 \item If we move $u^{n,2}$ on the right hand side  to time $t^n$, 
\begin{align*}
    u^{n,2}  =& u^n + \alpha_1 \tau u_t^{n,1} + \gamma \tau u_t^{n,2} - \gamma \tau^2 (2D-1) \frac{1+\gamma}{2} { u_t^n} + \l \frac{1 - \gamma}{2} -\alpha_1 \r \frac{u^{n,1} - u^n}{\gamma } \nonumber \\
    &  -C  \tau \l  \frac{1 + \gamma}{2}\r  (u_{x}^{n,1} - u_{x}^n) -C \tau  \l  \frac{1 + \gamma}{2}\r  (u_{y}^{n,1} - u_{y}^n).       
\end{align*}
\end{enumerate}
In both cases, for $u_x^{n,1}-u_x^n$ and $u_y^{n,1}-u_y^n$, we use \eqref{eq:uxn1} and \eqref{eq:uyn1}, respectively, passing the times $t^{n,1}$ to $t^n$.

\subsection{Third IMEX stage} \label{sm:2d_3rd}
In this case, the third stage reads:
\begin{align*}
    \frac{u^{n,3} - u^n}{\tau} =& \l 1 -\alpha_2 \r \left[ C u_x^{n,1} + C u_y^{n,1} + (2D-1)u^{n,1}  \right] + \alpha_2 \left[ C u_x^{n,2} + C u_y^{n,2} + (2D-1)u^{n,2}  \right] \nonumber \\
    &+ \beta_1 \left[ D u^{n,1}_{xx} + D u^{n,1}_{yy} \right] + \beta_2 \left[ D u^{n,2}_{xx} + D u^{n,2}_{yy}  \right] +\gamma \left[ D u^{n,3}_{xx} + D u^{n,3}_{yy}  \right]. 
\end{align*}
Following Section \ref{subsec:IMEX_gen_stage3} we get
\begin{align*}
    \frac{u^{n,3} - u^n}{\tau} =&  \l {1-\alpha_2} - \frac{(\beta_2-\alpha_2)\alpha_1}{\gamma}  \r u_t^{n,1} + \alpha_2u_t^{n,2} + \gamma u_t^{n,3} - \gamma \l 2D-1\r \l u^{n,3} - u^n\r \\ 
    & + \l \alpha_2 + \beta_1 -1 - \frac{\beta_2-\alpha_2}{\gamma} \l \frac{1-\gamma}{2}-\alpha_1\r \r \frac{u^{n,1} - u^n}{\gamma \tau} \nonumber \\
    &+ \frac{ \beta_2 - \alpha_2 }{\gamma}  \left[  \frac{u^{n,2}-u^n}{ \tau}  \right] -C  \l u_x^{n,1}-u_x^{n} \r - C   \l u_y^{n,1}-u_y^n\r.
\end{align*}
Finally, we propose two approaches:
\begin{enumerate}
\item Algebraic solution:
\begin{align*}
    u^{n,3}  = u^n + \frac{\tau}{1+\gamma\tau(2D-1)} \Bigg[ &\l {1-\alpha_2} - \frac{(\beta_2-\alpha_2)\alpha_1}{\gamma}  \r u_t^{n,1} + \alpha_2u_t^{n,2} + \gamma u_t^{n,3}  \\ 
    & \hspace{-0.1cm} + \l \alpha_2 + \beta_1 -1 - \frac{\beta_2-\alpha_2}{\gamma} \l \frac{1-\gamma}{2}-\alpha_1\r \r \frac{u^{n,1} - u^n}{\gamma \tau} \nonumber \\
    & \hspace{-1.0cm}+ \frac{ \beta_2 - \alpha_2 }{\gamma}  \l \frac{u^{n,2}-u^n}{ \tau}  \r  - C  \l u_x^{n,1}-u_x^{n} \r - C \l u_y^{n,1}-u_y^n\r  \Bigg].
\end{align*}

    \item Taking $u^{n,3}$ on the right into time $t^n$ would result in:
\begin{align*}
    u^{n,3} =& u^n + \l 1-\alpha_2 - \frac{(\beta_2-\alpha_2)\alpha_1}{\gamma}  \r \tau u_t^{n,1} + \alpha_2 \tau u_t^{n,2} + \gamma \tau u_t^{n,3}  - \gamma \tau^2 (2D-1) {u_t^n}  \\ 
    & + \l \alpha_2 + \beta_1 -1 - \frac{\beta_2-\alpha_2}{\gamma} \l \frac{1-\gamma}{2}-\alpha_1\r \r \frac{u^{n,1} - u^n}{\gamma} \nonumber \\
    &+ \frac{ \beta_2 - \alpha_2 }{\gamma}  \left[ u^{n,2}-u^n  \right] -C \tau \l u_x^{n,1}-u_x^{n} \r - C \tau \l u_y^{n,1}-u_y^n\r.   \nonumber        
\end{align*}

\end{enumerate}

\subsection{Approximations for $u^n_{xxx}$, $u^n_{yyy}$, $u^n_{yyx}$ and $u^n_{xxy}$} \label{sm:2d_4th}

Since $k=2$ for the third-order IMEX LDG scheme here considered, $\hat{u}_{xx}$ and $\hat{u}_{yy}$ are constant over each volume along the $x$ and $y$ directions, respectively. To compute $u^n_{xxx}$ for points in the boundary, we take a finite difference approximation of the second derivative over the first derivative $\hat{u}_x$. Let us assume that $(x,y) \in \Gamma(\Omega)$ and also $(x,y) \in \square_{ij}$. To shorten the notation, in the approximations below we denote $$\l \hat{u}_x \r _{i\pm a,j\pm b}  = \l \hat{u}_x\r_{i\pm a,j\pm b}  (x\pm a\Delta_i x,y\pm b\Delta_j y).$$ More precisely, the approximations are:
\begin{itemize}
    \item If possible, we take centered approximation $$\hat{u}_{xxx}(x,y) \approx \dfrac{\l \hat{u}_x\r_{i+1,j} - 2  \l \hat{u}_x\r_{i,j}  + \l \hat{u}_x\r_{i-1,j} }{(\Delta_i x)^2}.$$
    \item Otherwise, we take backward or forward approximations.
\end{itemize}
The computation of $u_{yyy}^n$ is done analogously.

Finally, we sketch the approximation of the third-order mixed derivatives with $u^n_{xxy}$. The idea is to compute the mixed derivative of $\hat{u}_x$. As an example, the approximation of $u^n_{xxy}$ at points over the south boundary is performed as follows,
\begin{itemize}
    \item If the point does not belong to volumes at the corners,
    \begin{align*}
        &\hspace{-0.45cm} \hat{u}_{xxy}(x,y)\approx \\
        & \hspace{-0.45cm} \dfrac{-\l \hat{u}_x\r_{i+1,j+2} +4 \l \hat{u}_x\r_{i+1,j+1} -3 \l \hat{u}_x\r_{i+1,j} + \l \hat{u}_x\r_{i-1,j+2} - 4\l \hat{u}_x\r_{i-1,j+1} +3 \l \hat{u}_x\r_{i-1,j}}{4\,\Delta_i x\, \Delta_j y}.   
    \end{align*}
    \item If the point belongs to the south-west corner,
    \begin{align*}
        \hat{u}_{xxy}(x,y)\approx& - \dfrac{-\l \hat{u}_x\r_{i+2,j+2} + 4 \l \hat{u}_x\r_{i+2,j+1} - 3 \l \hat{u}_x\r_{i+2,j}}{4\,\Delta_i x \, \Delta_j y} \\
        &+ 4\dfrac{-\l \hat{u}_x\r_{i+1,j+2} + \l \hat{u}_x\r_{i+1,j+1} - 3\l \hat{u}_x\r_{i+1,j}}{4 \, \Delta_i x \, \Delta_j y} \\
        &- 3\dfrac{-\l \hat{u}_x\r_{i,j+2} + 4\l \hat{u}_x\r_{i,j+1} - 3\l \hat{u}_x\r_{i,j}}{4 \, \Delta_i x \, \Delta_j y}.
    \end{align*}
    \item Otherwise (south-east corner),
        \begin{align*}
        \hat{u}_{xxy}(x,y)\approx&  3\dfrac{-\l \hat{u}_x\r_{i,j+2} + 4 \l \hat{u}_x\r_{i,j+1} - 3 \l \hat{u}_x\r_{i,j}}{4\,\Delta_i x \, \Delta_j y} \\
        &- 4\dfrac{-\l \hat{u}_x\r_{i-1,j+2} + 4\l \hat{u}_x\r_{i-1,j+1} - 3\l \hat{u}_x\r_{i-1,j}}{4 \, \Delta_i x \, \Delta_j y} \\
        &+ \dfrac{-\l \hat{u}_x\r_{i-2,j+2} + 4\l \hat{u}_x\r_{i-2,j+1} - 3\l \hat{u}_x\r_{i-2,j}}{4 \, \Delta_i x \, \Delta_j y}.
    \end{align*}
\end{itemize}
The approximations over the remaining boundaries are done in the same way. An analogous approach is used for the approximations of $u^n_{yyx}$.

\subsection{Boundary plots}  \label{sm:2d_5th}

In this section, we present some boundary plots for the numerical experiment of Section \ref{sec:exp2d}. In Figure \ref{fig:BL2D_countour}, contour plots show the final time errors in space in the cases without and with boundary treatment. Large boundary spatial errors (concerning interior errors), appearing when no boundary treatment is considered, vanish with the developed method. Finally, in Figure \ref{fig:BL2D}, some one-dimensional cuts of Figure \ref{fig:BL2D_countour} in the middle of the spatial domain are shown.

\begin{figure}[!htb]
\centering
\subfigure{\includegraphics[scale=0.42]{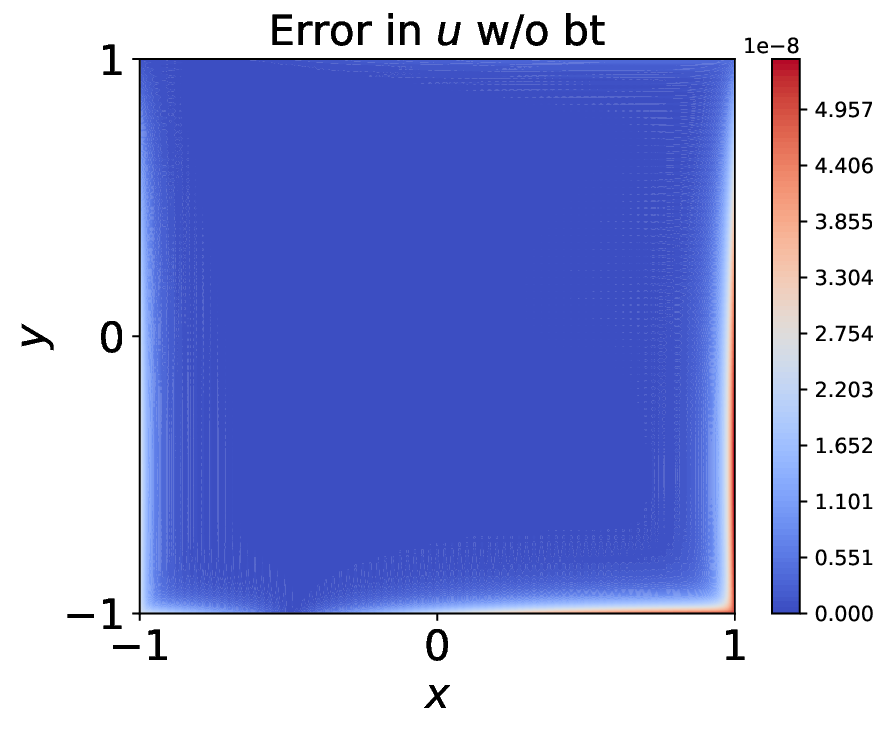}}
\subfigure{\includegraphics[scale=0.42]{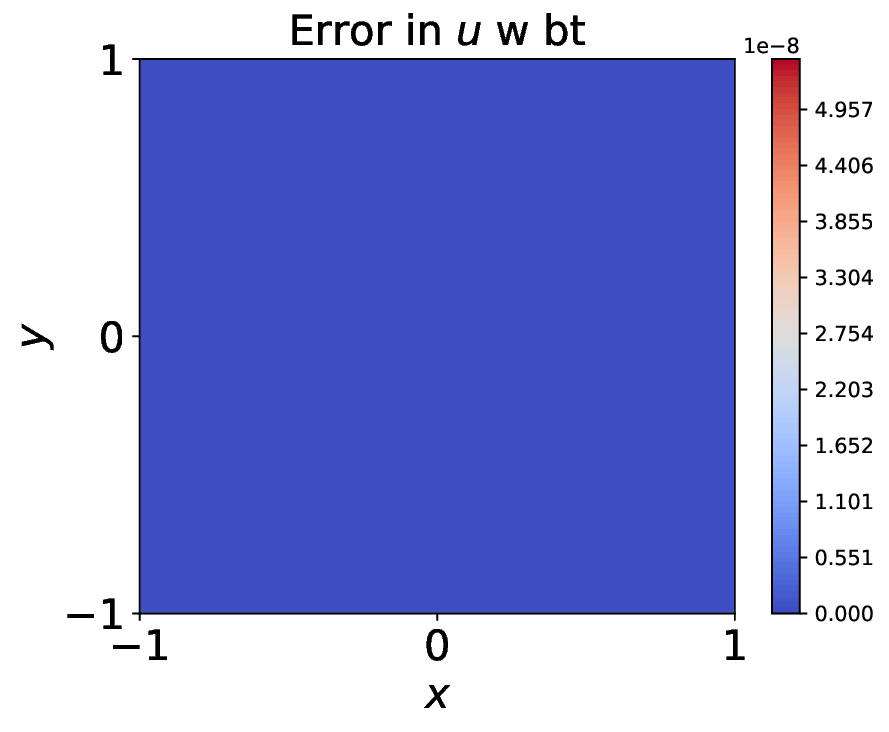}}
\caption{Contour plots for the spatial errors considering $N,M=80$ with the same data of Table \ref{tb-linear2d-sourceterms}, $t=T$.}
\label{fig:BL2D_countour}
\end{figure}

\begin{figure}[!htb]
\centering
\subfigure{\includegraphics[scale=0.43]{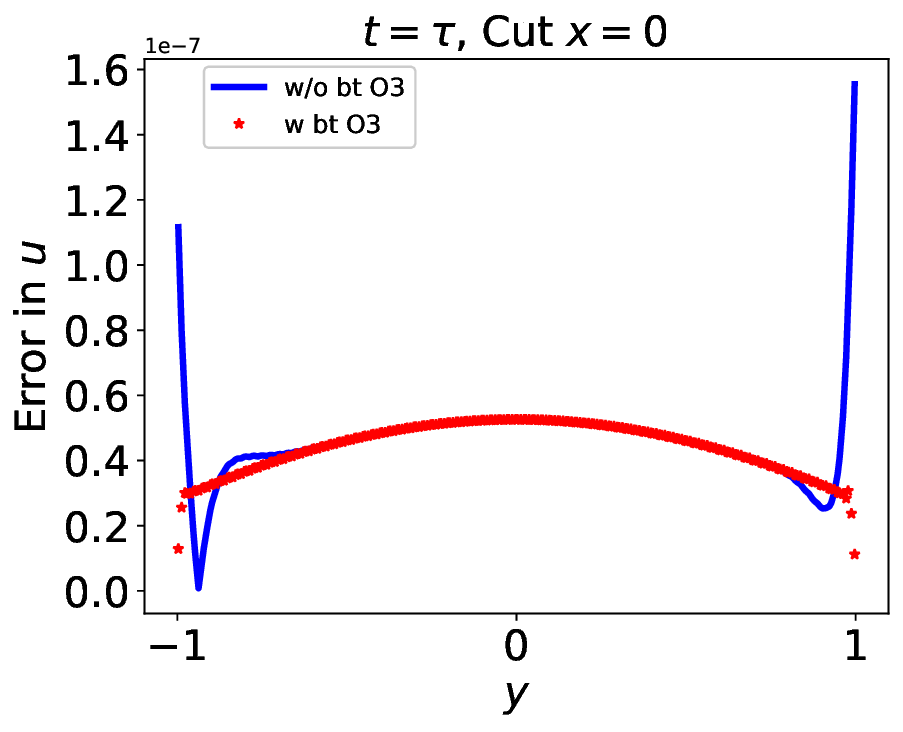}}
\subfigure{\includegraphics[scale=0.43]{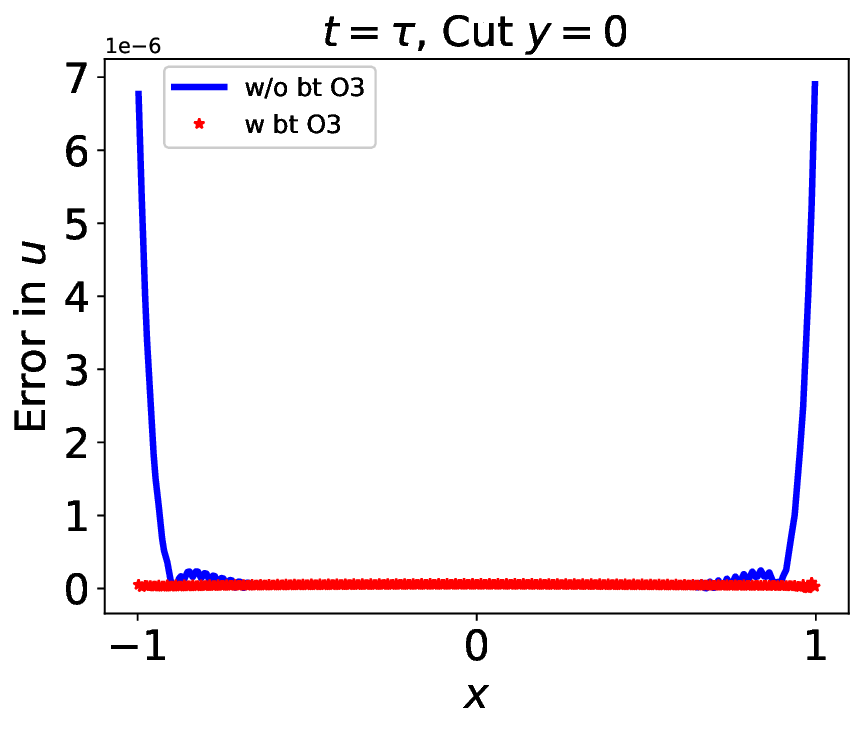}}
\subfigure{\includegraphics[scale=0.43]{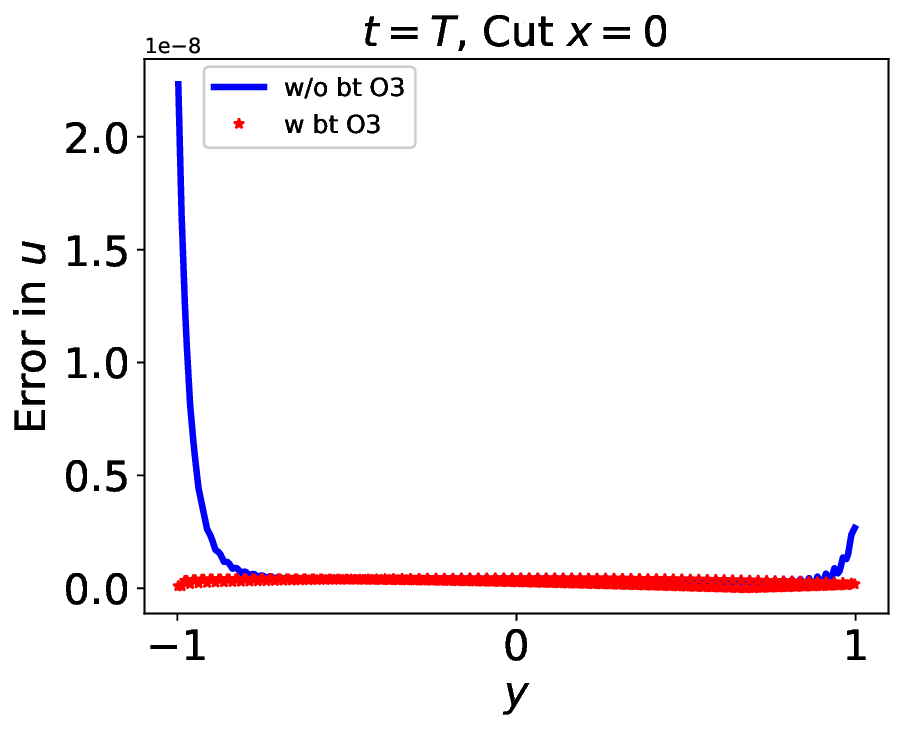}}
\subfigure{\includegraphics[scale=0.43]{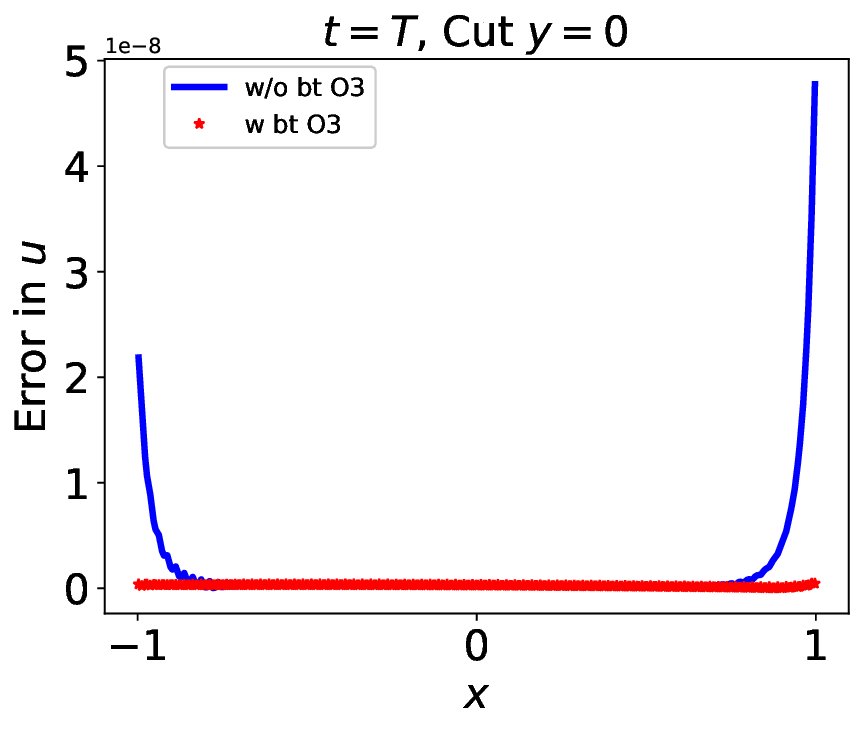}}
\caption{Spatial errors for some cuts of Figure \ref{fig:BL2D_countour}.}
\label{fig:BL2D}
\end{figure}

\end{document}